\documentclass{amsart}

\usepackage{amsmath,amsthm,amssymb,amsfonts, epsfig}
\usepackage{setspace, stmaryrd, verbatim, euro, enumerate}
\usepackage{bbm, tensor,comment,textcomp, todonotes}
\usepackage{hyperref}



\usepackage{color}



  

\newtheorem{theorem}{Theorem}[section]

\newtheorem{conjecture}[theorem]{Conjecture}
\newtheorem{corollary}[theorem]{Corollary}

\newtheorem{definition}[theorem]{Definition}
\newtheorem{lemma}[theorem]{Lemma}
\newtheorem{proposition}[theorem]{Proposition}

\theoremstyle{definition}
\newtheorem{example}[theorem]{Example}
\theoremstyle{example}
\newtheorem{remark}[theorem]{Remark}

\newcommand\cB{\mathcal{B}}
\newcommand\cC{\mathcal{C}}
\newcommand\cD{\mathcal{D}}

\newcommand\cF{\mathcal{F}}
\newcommand\cG{\mathcal{G}}
\newcommand\cH{\mathcal{H}}
\newcommand\cI{\mathcal{I}}

\newcommand\cM{\mathcal{M}}
\newcommand\cN{\mathcal{N}}
\newcommand\cO{\mathcal{O}}

\newcommand\cV{\mathcal{V}}

\newcommand\bB{\mathbb{R}^N}

\newcommand\bE{\mathbb{E}}

\newcommand\bN{\mathbb{N}}

\newcommand\bR{\mathbb{R}}


\newcommand\an[1]{\langle #1 \rangle}
 



\DeclareMathOperator{\co}{co} 
\DeclareMathOperator{\aff}{aff} 
\DeclareMathOperator{\ri}{ri} 
\DeclareMathOperator{\gr}{gr} 
\DeclareMathOperator{\domn}{\cD(\mu,\nu)}
\DeclareMathOperator{\domnp}{\cD(\mu',\nu')}
\newcommand{\cvcmp}{C_x(\mu,\nu)}
\newcommand{\cnvmap}{E}




\usepackage{accents}
\DeclareMathOperator{\Lip}{Lip}
\newcommand\Lz{\text{Lipschitz }}


\newcommand\MN{\cM}


\DeclareMathOperator{\cc}{CC(\bB)}

\addtolength{\textheight}{1cm}

\title{Structure of martingale transports in finite dimensions}
\author{Jan Ob{\l}\'oj \and Pietro Siorpaes}
\date{\today{}}

\address{Ob\l\'oj: Mathematical Institute, University of Oxford, Oxford OX2 6GG}
\email{jan.obloj@maths.ox.ac.uk}
\urladdr{www.maths.ox.ac.uk/people/jan.obloj}
\address{Siorpaes: Department of Mathematics, Imperial College London, London SW7 2AZ}
\email{p.siorpaes@imperial.ac.uk}
\urladdr{www.imperial.ac.uk/people/pietro.siorpaes}

\begin{document}

\begin{abstract}
We study the structure of martingale transports in finite dimensions. We consider the family $\cM(\mu,\nu) $ of  martingale measures on $\bB\times \bB$ with given marginals $\mu,\nu$, and construct a family of relatively open convex sets $\{C_x:x\in \bB\}$, which forms a partition of $\bB$, and such that any martingale transport in $\cM(\mu,\nu) $ sends mass from $x$ to within $\overline{C_x}$, $\mu(dx)$--a.e. 
Our results extend the analogous one-dimensional results of Beiglb\"ock and Juillet \cite{BJ:16} and Beiglb\"ock et al. \cite{BeNuTu15}. We conjecture that the decomposition is canonical and minimal in the sense that it allows to  characterise the martingale polar sets, i.e.\ the sets which have zero mass under all measures in $\cM(\mu,\nu)$,  and offers the martingale analogue of the characterisation of transport polar sets proved in \cite{BeGoMaSc08}.\medskip\\
{\sc Note.} \emph{This work is made publicly available simultaneously to, and in mutual recognition of, a parallel and independent work \cite{DeMT:17} which studies the same questions. In due course, we plan to release an amended version proving the conjectured minimality of our convex partition.}
\end{abstract}
\thanks{This research has been generously supported by the European Research Council under the European Union's Seventh Framework Programme (FP7/2007-2013) / ERC grant agreement no.\ 335421. Jan Ob\l\'oj is also grateful to St John's College in Oxford for their financial support.}
\maketitle

\section{Introduction}

Optimal transportation is a classical and influential field in mathematics. Its origins trace back to Gaspard Monge, while its modern incarnation was born from works of Kantorovich. Since then, it has seen tremendous advances, in particular in understanding the geometry of the optimal transport maps, through the works of Brenier, Gangbo, McCann, Otto, Villani and many others, see Villani \cite{VillaniOldNew} for an extensive account of the theory. More recently, inspired by a number of applications in probability theory as well as financial mathematics, new variants of the problem have been studied under various constraints on the transports maps. 
The most notable example is  the so-called \emph{martingale optimal transport} (MOT) problem, where the transport dynamics have to obey the martingale condition. Martingales are used in mathematical finance to model dynamics of price processes and marginal specification can be seen as equivalent to knowing sufficiently many market prices of simple (European) options. If the cost functional being optimised is given by the payoff of another (exotic) option then MOT problem values correspond to the robust (no-arbitrage) bounds for such an exotic option. Such rephrasing of robust pricing as an MOT problem was achieved by Beiglb\"{o}ck et al.\ \cite{BHLP:13} in discrete time, and Galichon et al.\ \cite{GalichonHenryLabordereTouzi:11} in continuous time. The latter worked in a  one dimensional setting where a martingale is a time-change of a Brownian motion. In this way, the MOT problem links naturally to the Skorokhod embedding problem, a well studied topic in probability theory, see Ob\l\'oj \cite{genealogia} for an account. Recently, in a beautiful display of how new developments can be achieved when an old theory is re-interpreted using entirely novel methods, Beiglb\"{o}ck et al.\ \cite{BCH:16} obtained a geometric description of supports of optimal Skorokhod embeddings, akin to the Gangbo and McCann \cite{GangboMcCann:96} characterisation for optimal transportation. 

Similarly to optimal transport, where the Kantorovich duality was a cornerstone result, it is of paramount interest to understand duality for the MOT problem. Partial results, under suitable continuity of the cost functional, were established in \cite{BHLP:13}, see also \cite{Za15Mo}. However the continuity assumption allowed one to side-step the problem of understanding and describing the polar sets, i.e.\ null sets under all martingale transport plans. For (all) transports such a description was given in Beiglb\"{o}ck et al.\ \cite{BeGoMaSc08}, as a corollary of the  complete description of Kantorovich duality provided by Kellerer \cite{Kellerer:84}. Recently, Beiglb\"{o}ck and Juillet \cite{BJ:16} and  Beiglb\"{o}ck et al.\ \cite{BeNuTu15} obtained analogous results for martingale transports in dimension one. Our aim here is to prove similar  results in a (arbitrary) finite dimension. As we shall see, the one dimensional picture is rather special and already in dimension two, the results are much richer and more involved. 

To highlight the features and state our main result, let us introduce some notation. 
A transport from $\mu$ to $\nu$ is a positive measure $\theta$ on $\bB \times \bB$ whose marginals are $\mu$ and $\nu$, and the set of all such transports is denoted by $\Pi(\mu,\nu)$. 
Clearly $\Pi(\mu,\nu)$ is non-empty as we may always take $\theta=\mu\otimes \nu\in \Pi(\mu,\nu)$. In \cite{BeGoMaSc08} it is  shown that the only polar sets of $\Pi(\mu,\nu)$ are the trivial ones:
\begin{equation}\label{eq:Kellerer}
 \theta(N)=0 \ \forall \theta\in \Pi(\mu,\nu)\quad \Longleftrightarrow\quad N\subseteq (\cN_\mu\times \bB) \cup (\bB\times \cN_\nu) ,
\end{equation}
for some  $\mu$--null   set $\cN_\mu$ and  $\nu$--null set $\cN_\nu$. Here, for two probability measures $\mu,\nu$ on $\bB$ with finite first moments, we consider the subset of martingale transports
$$ \cM(\mu,\nu) := \big\{\theta \in \Pi(\mu,\nu):\, \bE[Y|X]=X \textrm{ for }(X,Y)\sim \theta\, \big\}. $$
Equivalently, $\theta\in\cM(\mu,\nu)$ if and only if $\theta\in \Pi(\mu,\nu)$ and for any disintegration $\theta=\mu\otimes \gamma$, where $\gamma(x,\cdot)$ is a probability measure with a finite first moment, one has $\int y \gamma(x,dy)=x$ for $\mu$--a.e $x$. Note that $\cM(\mu,\nu)$ may often be empty. Jensen's inequality implies that if it is non-empty then $\mu$ and $\nu$ are in convex order, i.e. 
$$\int_{\bB} \phi d\nu \geq \int_{\bB} \phi d\mu  \text{ for every  convex  }\phi:\bR^N\to \bR ,$$
in which case we write $\mu\preceq_c \nu$; notice that  $\phi$ may fail to be integrable, but its negative part is integrable since a convex function is bounded below by an affine function\footnote{Indeed if $\phi$ is convex then its sub-differential at any point $x$ is non-empty, see \cite[Chapter 6]{HiLe93V1}.}, which is integrable under any measure with finite first moment. In a seminal work, Strassen \cite{Strassen:65} showed that the condition $\mu\preceq_c \nu$  is not only necessary but also sufficient to have $\cM(\mu,\nu)\neq \emptyset$. 

In dimension one a full description of MOT polar sets was given by \cite{BeNuTu15}, using the domain $\domn=\{x\in \bR: 0 <u_{\nu -\mu}(x)\}$, where $u_\lambda=|\cdot |\ast \lambda$ is the potential function associated to $\lambda$ (see \eqref{eq:potential} below), and $u_{\nu -\mu}=u_{\nu}- u_{\mu}\geq 0$ since $||\cdot -x ||$ is convex. We will now rephrase their results in the language used in  this paper. They showed that, under any $\theta=\mu\otimes \gamma \in\cM(\mu,\nu)$,  the mass from $x\in \domn$ may travel to the closure of the  connected component $C_x=\cvcmp$ of  $\domn$ containing $x$, i.e. $\gamma(x,\cdot)$ is concentrated on  $\overline{C_x}$; and the mass from $x\notin \domn$ is not moved, i.e. $\gamma(x,\cdot)$ is concentrated on $\overline{C_x}=C_x:=\{x\}$ (i.e. $\gamma(x,\cdot)=\delta_x$). Moreover, they showed that this is essentially\footnote{Notice the  difference between $C_x$ and $\overline{C_x}$ in our statements. In fact, in \cite{BeNuTu15} a slightly stronger statement is shown, as the authors can identify a set $J_x$ such that $C_x\subseteq J_x \subseteq  \overline{C_x}$ and the mass in $x$ has to stay in $J_x$ and can travel anywhere within $J_x$.}   the only limitation imposed by the martingale constraint, i.e. the mass from $x$ can travel \emph{anywhere} within $C_x$, meaning that the only martingale polar subsets of the graph $\gr(C):=\cup_{x\in \bR} \{x\}\times C_x$ of the multifunction $x \mapsto C_x$ are the trivial ones, similarly to \eqref{eq:Kellerer}. 

Notice that the potential functions are continuous, so the domain is open and thus the connected components are open disjoint intervals and they are at most countably many.  As it turns out, if one wishes to obtain a similar characterization of martingale polar sets in dimension greater than one, the nature of these components is necessarily more complex and intriguing.  
 Using simple examples, we argue that the components can be uncountably many and convexity, not connectedness, is their defining\footnote{Of course, a set $I\subseteq\bR$ is connected iff it is convex (and iff it is an interval); but in $\bR^N$ there are connected sets which are not convex.} property; also, different components may  have different Hausdorff dimensions. Having defined the domain  as $\cD:=\domn:=\{x\in \bB: \cvcmp\neq \{x\}\}$ (in analogy to what holds in the one dimensional case), we show that $\cvcmp$ can no longer be defined as the connected component of $x$ in $\domn$ (in fact it just cannot be defined using  $\domn$); indeed, we give an example of measures   $\mu\preceq_c \nu$ and  $\mu'\preceq_c \nu'$ with the same domain $\domn=\domnp$ but with drastically different components. 
In consequence, in arbitrary dimension the proper definition of the domain and its components is much more involved.

The first results in this direction were obtained by Ghoussoub et al. \cite{GKL:15}, who studied mainly the geometry of the optimal martingale transports in $\bB$ for the cost functions $\pm ||x-y||$. While their  focus was different from ours, they considered -- as we do -- a generally uncountable decomposition of the space into  relatively open convex sets, and  a corresponding disintegration of the measures involved. However,  their decomposition depends on $\theta\in \cM(\mu,\nu)$ and on a significant measurability assumption, and they work with a topology which is not separable which poses crucial difficulties since Polish structure is needed to consider disintegrations.
 Instead, we associate to each point $x$ a relatively open convex set $C_x \ni x$, which we call its (convex) component, which depends on $\nu -\mu$, but not on the choice of $\theta\in \cM(\mu,\nu)$. As we provide a constructive proof of existence of the convex components, we can give explicit examples in special cases of interest, and we plan to prove the measurability of the map $\overline{C}:x\mapsto \overline{C_x}$ in the future version of this paper. Moreover, having singled out the Wijsman topology as the appropriate one to consider, we do work with a Polish space, and we can rely on many results in the literature about measurability of multifunctions: indeed, $\overline{C}$ considered as a set-valued function is Borel measurable (with respect to the Wijsman topology on the family of closed convex sets) if and only if $\overline{C}$  considered as a multifunction is measurable.

Our main contribution is to provide the definition of convex components (see Definition \ref{def:convex_component} below), motivate it with examples,  and establish its use in describing the possible evolutions of martingale optimal transport plans by proving the following theorem.
\begin{theorem}\label{thm:main_result}
Let $\mu,\nu$ be two probability measures on $\bB$ in convex order  $\mu\preceq_c \nu$. 
Then the family $\{C_x:x\in \bB\}$ of convex components associated with $(\mu,\nu)$ forms a partition of $\bB$: $C_x=C_y$ or $C_x\cap C_y=\emptyset$, $x,y\in \bB$ and $x\in C_x$.
Each $C_x$ is convex and relatively open, and for every $\theta=\mu\otimes \gamma \in\cM(\mu,\nu)$, $\gamma(x,\cdot)$ is concentrated on  $\overline{C_x}$ for $\mu$ a.e. $x$, i.e. $\theta$ is concentrated on the graph  $\gr(\overline{C}):=\cup_{x\in \bR^N} \{x\}\times \overline{C_x}$.
\end{theorem}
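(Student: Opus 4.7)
The plan is to establish the partition property first, and then the concentration statement for a given $\theta\in\cM(\mu,\nu)$. For the partition, granted Definition \ref{def:convex_component}, three things need checking: that $x\in C_x$; that $C_x$ is convex and relatively open (both presumably built into the construction); and the substantive claim that $y\in C_x$ implies $C_y=C_x$. The last point is what makes ``lying in the same convex component'' an equivalence relation, and I would prove it by showing both inclusions via the characterising/minimality property of $C_x$, arguing that an asymmetry between $C_y$ and $C_x$ would contradict the canonical nature of the construction associated with the pair $(\mu,\nu)$.

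For the concentration statement, fix a disintegration $\theta=\mu\otimes\gamma$. The aim is to show $\gamma(x,(\overline{C_x})^c)=0$ for $\mu$-a.e.\ $x$. I would proceed in the following steps. First, translate the martingale property $\int y\,\gamma(x,dy)=x$ into a barycentric statement: if $K$ is a closed convex set with $x\in K$ and $\gamma(x,K^c)>0$, then genuine mass exits $K$ and this has to be compensated, via the martingale constraint, by mass lying in certain directions relative to $\partial K$. Second, suppose for contradiction that on a set $A$ of positive $\mu$-measure one has $\gamma(x,(\overline{C_x})^c)>0$; decompose $\mu$ along the partition $\{C_x\}$, isolate the contribution of $A$, and compare the mass leaving $\overline{C_x}$ under $\theta$ against the $\nu$-mass available outside $\overline{C_x}$. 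Because the convex components are engineered to absorb all transport that the martingale constraint \emph{forces} while keeping $\nu-\mu$ balanced outside, this excess mass should violate a potential-theoretic identity involving $u_\mu,u_\nu$ and suitable convex test functions built from indicators of $(\overline{C_x})^c$.

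The main obstacle is twofold. First, the measurability of the partition: the family $\{C_x:x\in \bB\}$ is in general uncountable and consists of sets of varying affine dimension, so the disintegration and the identification of the set $\{x:\gamma(x,(\overline{C_x})^c)>0\}$ require that $x\mapsto \overline{C_x}$ be a measurable multifunction. This is exactly why the introduction highlights the Wijsman topology as the natural Polish structure in which to formulate such a claim; the argument should invoke standard measurable selection results to reduce the uncountable decomposition to a countable bookkeeping. Second, the relative-openness of $C_x$ forces subtle treatment of the boundary: a point lying on the relative boundary of one component is in the relative interior of another (necessarily lower-dimensional) component, and both the partition and the martingale transport must be compatible with this stratification. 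Making the transitions between components consistent with the martingale constraint, without the definition becoming circular, is the delicate point I expect will absorb most of the technical effort, and is presumably what distinguishes the present canonical decomposition from the $\theta$-dependent one in \cite{GKL:15}.
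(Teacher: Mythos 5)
Your outline of the partition argument and, especially, your concentration argument diverge substantially from what the paper actually does, and in a direction that would not work. The paper's proof is a direct assembly of three already-established ingredients: Proposition~\ref{prop:PropOfA_x} (the sets $A_x((\phi^n)_n)$ form a convex partition), Proposition~\ref{prop:as_aff_comp} ($\gamma(x,\cdot)$ is concentrated on $\overline{A_x((\phi^n)_n)}$ for $\mu$-a.e.\ $x$ whenever $\an{\nu-\mu,\phi^n}\to 0$), and Theorem~\ref{ConconA_x} (a probability measure concentrated on a convex set $D$ is in fact concentrated on the face of $D$ that contains its barycenter in its relative interior). Because $\cnvmap_x(\mu,\nu)$ is, by construction~\eqref{eq:cvcmp_def}, the intersection of a \emph{countable} family $\overline{A_x((\phi^{k,n})_n)}$, $k\ge 1$, one simply intersects countably many $\mu$-a.e.\ statements to get $\gamma(x,\cnvmap_x)=1$ a.e., and then Theorem~\ref{ConconA_x} with $\alpha=\gamma(x,\cdot)$ and $D=\cnvmap_x$ gives $\gamma(x,\overline{C_x})=1$, since $\overline{C_x}=\overline{F_x(\cnvmap_x)}$. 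For the partition property, the key step is the face monotonicity $K_1\subseteq K_2$, $x\in K_1$ $\Rightarrow \ri(F_x(K_1))\subseteq \ri(F_x(K_2))$, which gives $C_x\subseteq A_x((\phi^{k,n})_n)$ for each $k$; if $y\in C_x$, then $y\in A_x((\phi^{k,n})_n)$, so by Proposition~\ref{prop:PropOfA_x} $A_x=A_y$ for every $k$, hence $\cnvmap_x=\cnvmap_y$ and $x,y$ lie in the relative interior of the same face.

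The genuine gaps in your proposal are as follows. First, the concentration argument you sketch is a contradiction argument built on potential functions $u_\mu,u_\nu$ and a ``potential-theoretic identity'' to be violated; but the paper explicitly demonstrates (via Examples~\ref{ex:continuous}--\ref{ex:mixed}, where $\cD_{\mu^\infty,\nu^\infty}=\cD_{\tilde\mu^k,\nu^k}$ yet the components differ drastically) that in dimension $\ge 2$ the potential-theoretic data is strictly insufficient to pin down the components --- one must work with the full family of convex test functions and the asymptotically affine machinery, not with $u_{\nu-\mu}$. So your proposed mechanism is the one the paper argues \emph{cannot} work. Second, you do not use the countable family $(\phi^{k,n})_n$ coming from the essential-infimum construction, which is exactly what lets the paper avoid the uncountable-partition measurability issue you raise: the only uncountable object is $x\mapsto C_x$, but the a.e.\ concentration statement is obtained from countably many a.e.\ statements, sidestepping the selection problem (the paper in fact defers full measurability of $x\mapsto\overline{C_x}$ to future work). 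Third, your argument for $y\in C_x\Rightarrow C_y=C_x$ appeals to a vague ``canonical nature'' of the construction rather than to the concrete lemmas --- Lemma~\ref{AffCompIsFace} (identifying $A_x$ as $\ri(F_x)$ of the level set $\{\phi^n-b^n\to 0\}$) and the monotonicity of $\ri(F_x(\cdot))$ under set inclusion --- that actually drive the equivalence.
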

We remark that it easily follows from Theorem \ref{thm:main_result} that  $\mu$ and $ \nu$ coincide on the complement of $\overline{\domn}$ (as it happens in dimension one), justifying further our definition of domain; see Corollary \ref{Mu=NuOutsideD}.
As a very special sub-case of our theorem above, we will obtain the following corollary, greatly generalising \cite[Theorem 7.A.15]{ShSh07Stochastic}, which amounts to the case where $\phi(y)=||y-b||^2$ (where $b$ is the common barycenter of $\mu$ and $\nu$, i.e. $b:=\int x \mu(dx)=\int x \nu(dx)$). 
\begin{corollary}
\label{MeasEqintro}
If  $\mu\preceq_c \nu$ and $\int \phi d\mu= \int \phi d\nu<\infty$ for a strictly convex $\phi:\bR^N \to \bR$ then $\nu=\mu$.
\end{corollary}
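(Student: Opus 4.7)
The plan is to reduce the claim to Jensen's inequality applied to a disintegration kernel of a martingale transport, and then exploit strict convexity to force that kernel to be a Dirac mass. Concretely, since $\mu\preceq_c\nu$, Strassen's theorem cited in the text guarantees some $\theta=\mu\otimes\gamma\in\cM(\mu,\nu)$, so that for $\mu$-a.e.\ $x$ the probability $\gamma(x,\cdot)$ has barycenter $x$. Convexity of $\phi$ and Jensen's inequality then give $\phi(x)\leq\int\phi(y)\,\gamma(x,dy)$ for $\mu$-a.e.\ $x$, with both sides integrable against $\mu$ thanks to the hypothesis $\int\phi\,d\mu=\int\phi\,d\nu<\infty$ combined with the affine lower bound on $\phi$ pointed out in the introduction (the finite first moments of $\mu$ and $\nu$ being implicit in $\mu\preceq_c\nu$). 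Integrating in $x$ against $\mu$ and using the hypothesis of equality of the two integrals forces equality in Jensen for $\mu$-a.e.\ $x$.

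The heart of the argument is then to deduce $\gamma(x,\cdot)=\delta_x$ from this equality. I would fix such an $x$, pick a subgradient $p\in\partial\phi(x)$ (which exists by the footnote in the introduction) and consider the non-negative function $y\mapsto \phi(y)-\phi(x)-\an{p,y-x}$. Its integral against $\gamma(x,\cdot)$ vanishes (by the equality case of Jensen and the barycenter condition $\int(y-x)\,\gamma(x,dy)=0$), so it must vanish $\gamma(x,\cdot)$-a.e.; but strict convexity of $\phi$ makes its zero-set equal to $\{x\}$. Hence $\gamma(x,\cdot)$ is concentrated at $x$, i.e.\ $\gamma(x,\cdot)=\delta_x$ for $\mu$-a.e.\ $x$. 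Integrating the kernel against $\mu$ yields $\nu=\mu$.

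I view the main potential obstacle as routine measure-theoretic bookkeeping (measurable selection of a subgradient, integrability of $\phi$ against $\gamma(x,\cdot)$ for $\mu$-a.e.\ $x$), both of which are standard given the affine lower bound and the finiteness hypothesis. In the spirit of the paper, the corollary can be viewed as a consequence of Theorem \ref{thm:main_result}: the argument shows that every convex component $C_x$ must collapse to $\{x\}$, so that $\cD(\mu,\nu)$ is empty, and Corollary \ref{Mu=NuOutsideD} referenced in the excerpt then immediately yields $\mu=\nu$ on the whole of $\bB$.
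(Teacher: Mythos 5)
Your proof is correct, but it follows a genuinely different and more elementary route than the paper's. The paper proves the corollary as a direct application of its machinery: it regularises $\phi$ by the infimal convolutions $\phi_n:=\phi\,\square\, n\|\cdot\|\in\cC$, checks $\an{\nu-\mu,\phi_n}\to 0$ by monotone/dominated convergence, observes that strict convexity of $\phi$ together with $\phi_n\uparrow\phi$ forces $\cV((\phi_n)_n)=\emptyset$ (no line segment can be asymptotically affine for $(\phi_n)_n$), and then quotes Corollary~\ref{MeasEq}. You instead bypass the approximation and the asymptotically affine components entirely: you take an arbitrary $\theta=\mu\otimes\gamma\in\cM(\mu,\nu)$ from Strassen, integrate Jensen's inequality, use the hypothesis $\int\phi\,d\mu=\int\phi\,d\nu$ to force equality in Jensen $\mu$-a.e., and then exploit strict convexity via the supporting affine function at $x$ to conclude $\gamma(x,\cdot)=\delta_x$. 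Both are sound; yours is shorter and self-contained (and does not actually need measurable selection of subgradients, since the argument fixes one $x$ at a time after the a.e.\ statement has been established), while the paper's illustrates how the corollary drops out of its general framework for asymptotically affine components.

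One small inaccuracy in your closing remarks: your Jensen argument does not by itself show that the convex components $C_x$ collapse to singletons, and it does not use Theorem~\ref{thm:main_result} or Corollary~\ref{Mu=NuOutsideD}. The route through the convex components is precisely the paper's proof (via $\cV((\phi_n)_n)=\emptyset$ and Corollary~\ref{MeasEq}); your argument is an independent alternative, not a corollary of the theorem.
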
 
We conjecture that the closures of the convex components, as we defined them, are the smallest possible closed convex sets on which all martingale transports are confined, meaning that the only martingale polar subsets of $\gr(C)$ are the trivial ones, i.e. those of the form \eqref{eq:Kellerer}. We state this as a conjecture and we are currently working towards completing its proof.
\begin{conjecture}\label{con:reverse}
 In the setting of Theorem \ref{thm:main_result}, $N\subseteq \gr(C)$ is $\cM(\mu,\nu)$--polar, i.e.\ $\theta(N)=0$ for all $\theta\in \cM(\mu,\nu)$, if and only if it is $\Pi(\mu,\nu)$--polar.
\end{conjecture}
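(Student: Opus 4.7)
The $(\Leftarrow)$ direction is immediate since $\cM(\mu,\nu)\subseteq\Pi(\mu,\nu)$. For $(\Rightarrow)$, in view of \eqref{eq:Kellerer} one has to show that any $\cM(\mu,\nu)$-polar $N\subseteq\gr(\overline{C})$ is contained in $(\cN_\mu\times\bB)\cup(\bB\times\cN_\nu)$ for some $\mu$-null $\cN_\mu$ and $\nu$-null $\cN_\nu$.

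My plan is first to reduce the problem to a single convex component. Granted the announced measurability of $x\mapsto\overline{C_x}$, together with the partition property of Theorem \ref{thm:main_result}, one can pick measurable component representatives and disintegrate $\mu=\int\mu_C\,d\kappa(C)$ and $\nu=\int\nu_C\,d\kappa(C)$ with $\mathrm{supp}\,\mu_C\subseteq C$, $\mathrm{supp}\,\nu_C\subseteq\overline{C}$, and $\mu_C\preceq_c\nu_C$. Since every $\theta\in\cM(\mu,\nu)$ is concentrated on $\gr(\overline{C})$, it splits accordingly as $\theta=\int\theta_C\,d\kappa(C)$ with $\theta_C\in\cM(\mu_C,\nu_C)$; consequently, $N$ is $\cM(\mu,\nu)$-polar iff $N\cap(C\times\overline{C})$ is $\cM(\mu_C,\nu_C)$-polar for $\kappa$-a.e.\ $C$. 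A measurable-selection argument then glues the trivial-polar certificates across components, so it suffices to prove the conjecture separately on each pair $(\mu_C,\nu_C)$.

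The crux is thus to establish irreducibility on a single component: for any Borel $A\subseteq C$ and $B\subseteq\overline{C}$ with $\mu_C(A)>0$ and $\nu_C(B)>0$, there is some $\theta\in\cM(\mu_C,\nu_C)$ with $\theta(A\times B)>0$. The natural approach is by contradiction via duality. Assuming no such $\theta$ exists, the martingale analogue of Kantorovich--Kellerer duality should furnish a superhedging certificate $\phi(x)+\psi(y)+h(x)\cdot(y-x)\geq\1_{A\times B}(x,y)$ holding $\mu_C$-a.e.\ in $x$ and $\nu_C$-a.e.\ in $y$, yet with $\int\phi\,d\mu_C+\int\psi\,d\nu_C=0$. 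Using $(\phi,\psi,h)$ one would then carve $C$ into a strictly finer partition of relatively open convex pieces on which the restricted measures remain in convex order, contradicting the minimality property built into Definition \ref{def:convex_component} of the convex components.

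The main obstacle is precisely this last step: converting the martingale dual certificate into a genuine convex refinement of $C$. In one dimension, where components are intervals and the scalar potential $u_{\nu-\mu}$ encodes everything, the argument of \cite{BeNuTu15} goes through transparently; in arbitrary dimension one has to work with the much richer geometry of relatively open convex sets accessed via the Wijsman topology, and with measures of varying Hausdorff dimension across components. A further difficulty is the circularity between duality---which itself is sensitive to the very polar sets under scrutiny---and irreducibility: the duality theorem one would invoke must be proved in a form that does not presuppose a full characterisation of $\cM(\mu,\nu)$-polar sets. This is the open problem explicitly flagged as the focus of ongoing work.
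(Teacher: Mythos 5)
There is no proof of this statement in the paper: Conjecture~\ref{con:reverse} is exactly that, a conjecture, and both the abstract and the surrounding text state explicitly that the minimality of the convex partition is \emph{not} proved here but deferred to a future amended version (and, independently, to the parallel work \cite{DeMT:17}). So there is no paper argument against which your proposal can be checked; what you have produced is, like the paper itself, a roadmap rather than a proof, and you say as much in your final paragraph.

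That said, your roadmap is sensible and consistent with the hints the paper does give. The $(\Leftarrow)$ direction is indeed trivial from $\cM(\mu,\nu)\subseteq\Pi(\mu,\nu)$, and the reduction of $(\Rightarrow)$ to the triviality criterion \eqref{eq:Kellerer} is the right reading of \cite{BeGoMaSc08}. Your two-step plan --- disintegrate $(\mu,\nu)$ over the convex partition, then prove irreducibility on each component --- is precisely the generalisation of the one-dimensional argument of \cite{BeNuTu15} that the authors signal. But you should be clearer that you are stacking several open sub-problems, not just the last one you flag. (i) The disintegration over components presupposes the measurability of $x\mapsto\overline{C_x}$ with respect to the Wijsman/Effros structure, which the paper itself only announces and defers. (ii) The statement ``$N$ is $\cM(\mu,\nu)$-polar iff $N\cap(C\times\overline{C})$ is $\cM(\mu_C,\nu_C)$-polar for $\kappa$-a.e.\ $C$'' requires, for the nontrivial direction, a measurable selection of kernels $\gamma_C$ that reassembles into a single $\theta\in\cM(\mu,\nu)$; with an uncountable family of components of varying affine dimension this is not a formality, and the paper gives no tool for it. (iii) The duality step you invoke is not available off the shelf: a quasi-sure martingale duality at the level of generality needed would itself essentially encode the answer to the conjecture, which is exactly the circularity you name. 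So your proposal correctly identifies the shape of the argument and the principal obstruction, but it does not close any of the gaps; it is best read as an accurate diagnosis of why the paper leaves this as a conjecture rather than as an attempted proof of it.
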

Throughout the paper we work with several running examples which we use to illustrate the arising challenges and to motivate our definitions. These examples provide benchmark cases where we can identify what the convex components should be if they are to satisfy Theorem \ref{thm:main_result}, and hence they lead us towards developing a  general theory which  allows us to recover them as special cases. These examples are introduced in Section \ref{intro ex}. Subsequently, the paper is devoted to the definition of $C_x$ and its motivation and to the proof of the above theorem. We introduce notation and then, in Section \ref{sec:cnv_comp_results}, define  asymptotically affine components $A_x((\phi_n)_n)$ corresponding to some sequences convex functions $(\phi_n)_n$. Then, in Section \ref{sec:convex_components}, we construct the convex components  $C_x$ as  a certain essential intersection of such asymptotically affine components. To allow for a smooth narrative of the construction, many technical proofs are grouped in the subsequent Section \ref{sec:cnv_comp_proofs}. 
  
Finally we note that we have hoped to make this paper publicly available only after having proved Conjecture \ref{con:reverse}. However we have been recently made aware of a parallel and independent work of De March and Touzi \cite{DeMT:17} who study the same problem and obtain similar results using different techniques. 
Consequently, we have agreed to simultaneously make our works publicly available. 
  
  \section{Examples}
\label{intro ex}

\begin{example}\label{ex:discrete}
For $k\geq 2$, consider the following probability measures on $\bR^2$:
$$\mu^k:= \frac{1}{2k} \sum_{i=0}^{k-1}  2\delta_{x_i} , \quad \nu^k:= \frac{1}{2k} \sum_{i=0}^{k-1}\left(  \delta_{y_i^+} +  \delta_{y_i^-}\right),$$  
where $x_i:=(\frac{i}{k-1},0)$,  $y_i^{\pm}:=(\frac{i}{k-1},\pm 1)  \in [0,1]\times [-1,1]$. 
Define the kernel  $\gamma^\pm(x,\cdot)$ to be  $\frac{1}{2}(\delta_{(t,-1)}+\delta_{(t,1)})$ for $x=(t,0) \in  (0,1) \times \{0\}$ and to be $\delta_x$  otherwise. Note that, for every $x\in \bR^2$, $\gamma^\pm(x,\cdot)$ is a probability measure with finite first moments and with barycenter $x$  and that $\nu^k=\int \mu^k(dz) \gamma^\pm(z,\cdot)$. It follows that $\theta^k:= \mu^k \otimes \gamma^\pm\in \cM(\mu^k,\nu^k)$ and in particular $\mu^k\preceq_c\nu^k$, $2\leq k\leq \infty$. Further it is easy to see that $\cM(\mu^k,\nu^k)$ is a singleton and $\theta^k$ is the unique martingale transport connecting $\mu^k$ and $\nu^k$. Indeed, the martingale condition implies that the mass from $(0,0)$ may only go up and down -- it may not go right since $\nu^k$ puts no mass to the left, i.e.\ the atom in $(0,0)$ has to be distributed to the atoms in $\{0\}\times \{-1,1\}$. Iterating, we conclude. It follows Theorem \ref{thm:main_result} holds with $C_x=C_i:=\{\frac{i}{k-1} \} \times (-1,1)$ for $x\in C_i$, $i=0,\ldots, k-1$, and $C_x=\{x\}$ otherwise. Our general definitions have to reproduce this simple example.
\end{example}
\begin{example} \label{ex:continuous}
We consider the limiting case of Example \ref{ex:discrete} above. 
Let $\mu^\infty$ be uniform on $ (0,1)\times \{0\}$ and $\nu^\infty$ be uniform on $(0,1)\times \{-1,1\}$. In particular, 
$\mu^\infty=\lim_k \mu^k$ and $\nu^\infty=\lim_k \nu^k$. It is easy to see that $\theta^\infty=\mu^\infty\otimes \gamma^\pm$ is the unique element in $ \cM(\mu^\infty,\nu^\infty)$. It follows that Theorem \ref{thm:main_result} holds with 
$$C_{(t,s)}=\{t \} \times (-1,1),\quad \textrm{for all }(t,s)\in [0,1]\times (-1,1),$$
and $C_x=\{x\}$ otherwise. In particular there are uncountably many $(C_x)_{x\in \bR^2}$. Note also that, given a fixed Lebesgue null set $\Gamma$ in $(0,1)$, we could arbitrarily redefine $C_x$ for $x\in \Gamma$ and Theorem \ref{thm:main_result} would still hold. More generally, we observe that $x\to C_x$ is only determined $\mu(dx)$--a.e. We come back to this in Section \ref{sec:convex_components}. 
\begin{example}\label{ex:mixed}
Using notation of examples above, let 
 $\tilde\mu^k:=\frac{1}{2}(\mu^k+\delta_{(\frac{1}{2},0)})$, $2\leq k\leq \infty$.  
This case requires us to distinguish between even and odd $k$ since $\mu^k(\{(\frac{1}{2},0)\})>0$ iff $k$ is odd. For even $k$, or $k=\infty$, we let $\tilde \gamma^k(x,\cdot) = \gamma^\pm(x,\cdot)\mathbf{1}_{z\neq (\frac{1}{2},0)} + \nu^k(\cdot)\mathbf{1}_{x= (\frac{1}{2},0)}$. For odd $k$ we let 
$$\tilde \gamma^k(x,\cdot) = \gamma^\pm(x,\cdot)\mathbf{1}_{x\neq (\frac{1}{2},0)} + \left(\frac{k}{k+1}\nu^k(\cdot)+ \frac{1}{2(k+1)}(\delta_{(\frac{1}{2},-1)}+\delta_{(\frac{1}{2},1)})\right)\mathbf{1}_{x= (\frac{1}{2},0)}.$$
Observing that the barycentre of $\nu^k$ is $(\frac{1}{2},0)$, it follows instantly that each $\tilde \gamma^k(x,\cdot)\in \cM_1$ and has barycentre equal to $x$. In consequence, $\tilde\theta^k:=\tilde \mu^k\otimes \tilde \gamma^k\in \cM(\tilde \mu^k,\nu^k)$. In particular, the mass from the centre of the rectangle $[0,1]\times [-1,1]$ is spread to its corners. The convex components in Theorem \ref{thm:main_result} are the same for all $2\leq k\leq \infty$ and given by
\begin{itemize}
\item  
$C_{(0,s)}=\{0 \} \times (-1,1)$ and $C_{(1,s)}=\{1 \} \times (-1,1)$, for $s\in (-1,1)$,
\item
$C_{(t,s)}=(0,1) \times (-1,1)$ for $(t,s)\in (0,1)\times (-1,1),$ 
\item
$C_x=\{x\}$ for all other $x$.
\end{itemize} 
This example showcases two important features. First, the convex components may have different Hausdorff dimension for different $x\in \bB$. 
Second, the domains in this and in the previous example coincide: $\cD_{\mu^\infty,\nu^\infty}= \cD_{\tilde\mu^k,\nu^k}$, $2\leq k\leq \infty$, while the convex components are very different.  
\end{example}

\end{example}

\begin{example}
\label{ex:Gaussian}
If $\mu,\nu$ are Gaussian measures  on $\bR^n$ then $\mu\preceq_c \nu$ iff $\mu,\nu$ have the same mean and their covariance matrices  
$\Sigma_\mu, \Sigma_\nu$ are such that $\Sigma:=\Sigma_\nu -\Sigma_\mu$ is positive semidefinite (see \cite[Example 7.A.13]{ShSh07Stochastic}). By an orthogonal change of coordinates we can assume w.l.o.g. that $\Sigma$ is diagonal with eigenvalues $\sigma_1\geq \sigma_2 \geq \ldots \geq \sigma_n\geq 0$.
We will show that if $\Sigma$ is (strictly) positive definite then $\mu\preceq_c \nu$ are irreducible, i.e.\ $C_x=\bB$ for each $x\in \bB$. 
 More generally, let $k\in \{1,\ldots, n\}$ be such that $\sigma_i>0$ iff $i\leq k$; then the convex component $C_x$  of each point $x=(x_i)_i\in \bR^n$ is $\bR^k\times \{(x_{k+1}, \ldots, x_n)\}$.
\end{example}

\section{Convex components governing martingale transports}
\subsection{Notation}

We will denote with $\an{x,y}$ the usual dot product between $x,y\in \bB$, and with $||x||$ the associated Euclidian norm.
Given $W\subseteq \bB$, we will denote with $\co(W)$ its convex hull, with $\accentset{\circ}{W}$ its interior, with $\bar{W}$ its closure and with  $\partial W$ its border.
We will denote with  $\aff(V)$  (resp. $\ri(V)$) the affine hull  (resp. the relative interior)  of a convex set $V\subseteq \bB$, and with $I$ an arbitrary set of indices. We will denote with $B_{\epsilon}(x):= \{ y\in \bB: ||y-x||<\epsilon \}$ the open ball of radius $\epsilon>0$ centered in $x\in \bB$.
We will denote with $[x,y]$ (resp. $(x,y)$, $\overrightarrow{[x,y)}$, $\overleftrightarrow{(x,y)}$) the set $\{ x+t(y-x): t \in A\}$ with $A=[0,1]$ (resp. $(0,1) , [0,\infty), (-\infty, \infty)$).
Given  $K \subseteq \bB $,  $f:\bB \to \bR$ and $g:K \to \bR$, we denote with $f_{|K}$  the restriction of $f$ to $K$ and define\footnote{This definition makes sense if $K$ contains at least two points, which holds in the sequel any time we need to consider a \Lz constant.} $$\textstyle \Lip(g):=\sup_{x,y\in K, x\neq y} \frac{|g(x)-g(y)|}{||x-y||} $$ as the \Lz constant of  $g$ and set
\begin{align}
\label{defC}
\cC :=\{ \phi:\bB \to \bR  \text{ is convex and Lipschitz} \} , \quad \cC_+:=\{\phi \in \cC: \phi \geq 0  \}  .
\end{align} 
We will denote with $\cO$ the family of non-empty, convex and relatively open sets of $\bB$.
If $C\subseteq \bB$ is closed we define distance from $C$  as  $d_C(x):=\min_{y\in C} ||x-y||_{\bB} $, and we recall that if $C$ is convex then $d_C\in \cC_+$ (see \cite[Chapter 4, Example 1.3(c)]{HiLe93V1}) and, as   is easily seen, $\Lip(d_C)=1$.
If $\alpha$ is a real measure and $\phi$ a $\alpha$-integrable function, we often write  $\an{\alpha, \phi}$ for $\int \phi d\alpha $.
We denote with  $\MN(\bB)$ (or simply $\MN$) the set of  positive Borel measures $\alpha$ on $\bB$  which are finite  and have finite  first moment (i.e. are such that $\int_{\bB} (1+||x||)\alpha(dx)<\infty$), and with $\MN_1$ the set of probabilities in $\MN$.
Notice that if $f:\bB \to \bR$ is  \Lz and $\alpha\in \MN$ then $f\in L^1(\alpha)$.
Throughout the paper we consider a given pair $\mu,\nu\in \MN$ assumed to be in convex order
which we will write as  $\mu \preceq_c\nu$. 
We will use without further notice the fact that if $a$ is affine then $\an{\nu -\mu, \pm a}\geq 0$ and so $\an{\nu -\mu, a}=0$. Notice that the functionals $\an{\nu-\mu, \phi }$ and $\Lip( \phi_{|K})$, defined for $\phi \in \cC$, take values in $[0,\infty)$ and are positively homogenous. We recall that if $\phi:\bR \to \bR$ is convex,  
its right derivative $\phi'_+$  exists and is increasing and right continuous, and its
second derivative in the sense of Schwartz distributions is the positive Radon measure  $\phi''$  which satisfies $\phi''((c,d])=\phi'_+(d)-\phi'_+(c) $.

\subsection{Asymptotically affine components}
\label{sec:cnv_comp_results}
In dimension one all information needed to understand the structure of $\cM(\mu,\nu)$ is contained in the potentials function
\begin{equation}\label{eq:potential}
u_\lambda(x):= \int |x-y| \lambda(dy), \quad x\in \bR 
\end{equation}
of $\lambda:=\nu -\mu$.
The domain of $\mu \preceq_c\nu$, defined as the set $\{u_\nu >u_\mu \}=\{u_{\nu-\mu}>0 \} $, being open it is composed of at most countably many disjoint open intervals which are the convex components which delimit the martingale evolutions in $\cM(\mu,\nu)$.  
The key idea to generalise the study of martingale polar sets to dimension higher than one is that, instead of considering  $u_{\nu-\mu}(x)=\an{\nu-\mu, \phi_x}$ with $\phi_x=|\cdot -x|$, one should consider the wider family  $\an{\nu-\mu, \phi}$ where $\phi \in \cC$; restricting to the $\phi \in \cC_+$ such that $\phi(x)=x$ gives in a way an multidimensional equivalent of considering   $|\cdot -x|$. 
With this in mind, the following remark provides the crucial property which characterizes the convex components in a way that does not make any reference to the potential functions. 
\begin{remark}
\label{EqInDom}
In dimension one (i.e. if $\bB=\bR$) the following are equivalent
\begin{enumerate}
 \item  
 $\{ u_\nu > u_\mu \} \supseteq (c,d)$,
 \item 
if $\an{\nu-\mu, \phi} = 0$ for $\phi\in \cC$ then necessarily $\phi_{|(c,d)}$ is affine.
\item If $\an{\nu-\mu, \phi^n} \to 0$ for $(\phi^n)_{n \in \bN} \subseteq \cC$ then
there exist affine functions $(a_n)_n$ s.t. $ (\phi^n - a_n)(x) \to 0$  for all $x\in (c,d)$.
\end{enumerate} 
\end{remark} 
The above remark is easily proven using the identities (where $\phi\in \cC$)
$$\an{\nu-\mu, \phi} =\int_{\bR} (u_{\nu}-u_{\mu}) d \phi'' \text{ and } \phi''((c,d])=\phi'_+(d)-\phi'_+(c)  $$
and taking $a_n$ to be an affine function supporting $\phi$ at $x$.
 To obtain higher dimensional analogues of the concept of convex component, it is natural to start with the second property in Remark \ref{EqInDom}; the idea being essentially that the convex component of  $x$ should be the largest convex set on which all convex functions such that $\an{\nu-\mu, \phi} =0$ are affine. To make this more precise and prove the existence of such set, notice that if $\phi:\bR \to \bR$ is convex then there exists disjoint intervals $(a_n,b_n)$ such that $\phi$ is affine on each $[a_n,b_n]$ and is locally strictly convex on $\bR \setminus [a_n,b_n]$: indeed $[a_n,b_n)$ are the intervals of constancy of the increasing right continuous function $\phi'_+$, or equivalently $\bR \setminus (a_n,b_n)$ is the support of the measure $\phi''$. Thus, for any $x \in \bR \setminus [a_n,b_n]$ there exists no open interval containing $x$ on which $\phi$ is affine, whereas for $x\in (a_n,b_n)$ there exists the biggest open interval containing $x$ and on which $\phi$ is affine (it is indeed $(a_n,b_n)$). 
Notice that it is crucial that we insist on the intervals being open: if $\phi(x)=|x|$ there does not exists a biggest interval containing $0$ on which $\phi$ is affine, since $\phi$ is affine on both $(-\infty,0]$ and $[0,\infty)$  but not on their union.

So, one might conjecture that for any $\phi\in \cC$ and $x\in \bB$ there exists a largest set $A(\phi)_x\in \cO$ which contains $x$ and the family of such sets forms a partition of $\bB$. Building on the characterisation in Remark \ref{EqInDom} we would then expect that $C_x\subseteq A(\phi)_x$ for any $\phi$ such that $\an{\nu-\mu, \phi}=0$, and that  
$C_x$  should be defined as the intersection of $A(\phi)_x$ over such $\phi$. As we will see, this is `essentially' correct, but figuring out what  exactly is the correct construction turns out to be quite delicate.
First, for technical reasons related to the proof of Conjecture \ref{con:reverse}, we will work not with single functions $\phi$ such that  $\an{\nu-\mu, \phi}=0$, but rather  with sequences $(\phi^n)_n$ such that 
$\an{\nu-\mu, \phi^n}\to 0$. Second, it turns out that one should not take the intersection over all such sequences $(\phi^n)_n$, but rather an $\mu$-essential intersection, in a sense which we will motivate and explain later.
\begin{definition}
\label{def:as_aff}
We say that $(\phi^n)_{n\in \bN} \subseteq \cC$ is \emph{asymptotically affine on $V$} if there exist affine functions $(a_n)_n$ such that  $ \phi^n - a_n \to 0$ on $V$.
\end{definition} 
The intuitive meaning is that  if $\an{\nu-\mu, \phi^n} \to 0$ for $\phi^n\in \cC$ then $\phi^n_{|V}$ is `affine in the limit', meaning not  that $\phi^n$ converge to an affine function\footnote{Indeed the value of  $\an{\nu-\mu, \phi^n}$ does not change if we subtract from $\phi^n$ an arbitrary affine function $a^n$, and the $(a^n)_n$'s do not need to converge. Said otherwise, any sequence of affine functions is asymptotically affine, even if it is not converging.}, but rather that the $\phi^n$ are more and more `flat'. The above notion will ultimately allow us to define the multi-dimensional equivalents of the intervals $(c,d)$ of Remark \ref{EqInDom}.
\begin{proposition}\label{prop:def_aff_comp}
Fix $x\in \bB$ and $(\phi^n)_{n\in \bN} \subseteq \cC$. Then there exists the biggest, with respect to set inclusion, set in $\cO$ which contains $x$ and on which $(\phi^n)_n$ is asymptotically affine. Further, it is given by
\begin{align}\label{defA_x}
A_x((\phi^n)_n):=\ri(\co(\cup \{V\in \cO: x\in V  \text{ and $(\phi^n_{|V})_n$ is asymptotically affine} \} ))
\end{align} 
\end{proposition}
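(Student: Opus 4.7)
Denote by $\cF$ the collection $\{V\in \cO: x\in V, \ (\phi^n_{|V})_n \text{ is asymptotically affine}\}$ appearing in the formula for $A_x((\phi^n)_n)$, and note $\{x\}\in \cF$ so $\cF\neq \emptyset$. The plan is to pick once and for all a canonical sequence of affine functions, show it realises asymptotic affinity on every $V\in \cF$ simultaneously, propagate this to $W:=\co(\cup\cF)$ by convexity, and then verify that $A_x=\ri(W)$ is both in $\cF$ and contains every $V\in \cF$. The main obstacle is the first step: establishing uniformity of the affine sequence across all $V\in \cF$.

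\textbf{Step 1 (canonical affine functions).} Let $a_n$ be an affine function supporting $\phi^n$ at $x$, and set $\chi^n:=\phi^n-a_n\in \cC_+$, so $\chi^n(x)=0$. I claim $\chi^n\to 0$ on every $V\in \cF$, meaning the \emph{same} sequence $(a_n)_n$ witnesses asymptotic affinity on every $V$. Indeed, if $(\phi^n-b_n^V)\to 0$ on $V$, then $c_n:=b_n^V-a_n$ is affine on $\bB$, $c_n(x)\to 0$ (from $x\in V$ and $\chi^n(x)=0$), and $c_n(y)\ge -o(1)$ pointwise on $V$ (from $\chi^n\ge 0$). Since $V$ is relatively open and $x\in V$, there is $r>0$ with $B_r(x)\cap\aff(V)\subseteq V$; for $y$ in this symmetric ball $2x-y\in V$ as well, so the affine identity $c_n(y)+c_n(2x-y)=2c_n(x)$ combined with the one-sided bound at $2x-y$ yields $c_n(y)\le 2c_n(x)+o(1)\to 0$. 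Hence $c_n\to 0$ pointwise on a relatively open neighbourhood of $x$ in $\aff(V)$, and being affine $c_n\to 0$ on all of $\aff(V)$; thus $\chi^n=(\phi^n-b_n^V)+c_n\to 0$ on $V$. The reflection through $x$, available because $V$ is relatively open and $x\in V$, together with the nonnegativity $\chi^n\ge 0$, is precisely what forces any affine discrepancy between $b_n^V$ and $a_n$ to vanish.

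\textbf{Step 2 ($A_x\in \cF$).} By Step 1, $\chi^n\to 0$ on $\cup\cF$. Writing any $y\in W$ as a finite convex combination $\sum t_iv_i$ with $v_i\in \cup\cF$, convexity of $\chi^n$ and $\chi^n\ge 0$ give $0\le \chi^n(y)\le \sum t_i\chi^n(v_i)\to 0$, so $\chi^n\to 0$ on $W$ and hence on $A_x\subseteq W$. To see $x\in \ri(W)$, fix any $y\in W$ and write $y=\sum t_iv_i$ with each $v_i\in V_i\in \cF$; since each $V_i$ is relatively open with $x\in V_i$, we have $x+\delta(x-v_i)\in V_i\subseteq W$ for all small $\delta>0$, and by convexity $x+\delta(x-y)=\sum t_i[x+\delta(x-v_i)]\in W$, which is the prolongation characterisation of the relative interior. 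Thus $A_x$ is non-empty, convex, relatively open, contains $x$, and $(\phi^n)_n$ is asymptotically affine on $A_x$ via $(a_n)_n$, so $A_x\in \cF$.

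\textbf{Step 3 (maximality).} For any $V\in \cF$ and $y\in V$, the relative openness of $V$ yields $y+\delta(y-x)\in V\subseteq W$ for small $\delta>0$, and then $y=\tfrac{1}{1+\delta}[y+\delta(y-x)]+\tfrac{\delta}{1+\delta}x$ is a convex combination of a point of $W$ and a point of $\ri(W)=A_x$, with strictly positive weight on the latter. By the standard absorption property of the relative interior of a convex set, $y\in A_x$, so $V\subseteq A_x$, and $A_x$ is indeed the largest member of $\cF$.
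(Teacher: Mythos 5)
Your proof is correct, and its skeleton is the same as the paper's (which splits the argument into Lemmas \ref{AnySuppFn}, \ref{AsympAffOnCo}, \ref{ConvComp}, \ref{VinC}): fix a canonical supporting sequence $a_n$ at $x$, show it realises asymptotic affinity on every $V\in\cF$ simultaneously, propagate to $W=\co(\cup\cF)$ by convexity of $\chi^n$, and check $x\in\ri(W)$ together with maximality. The substantive difference is in Step 1, which is your version of Lemma \ref{AnySuppFn}. The paper proves it by citing a subdifferential-convergence theorem for pointwise-convergent convex sequences (\cite[Chapter~6, Theorem~6.2.7]{HiLe93V1}) to deduce $\nabla(b_n-a_n)(p)\to 0$ and hence $b_n-a_n\to 0$. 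You instead exploit $\chi^n\ge 0$ directly via the affine reflection identity $c_n(y)+c_n(2x-y)=2c_n(x)$ on a symmetric ball inside $V\cap\aff(V)$: the one-sided bound $c_n\ge -o(1)$ applied at $2x-y$ caps $c_n(y)$ from above, and then affineness of $c_n$ propagates pointwise convergence from the ball to all of $\aff(V)$. This is more elementary and self-contained than the paper's citation-based step, though it buys essentially the same conclusion. Your Steps 2 and 3 also replace the paper's contradiction arguments (a separating-hyperplane contradiction in Lemma \ref{ConvComp} and an extension contradiction in Lemma \ref{VinC}) with direct invocations of the prolongation characterisation of $\ri$ and the accessibility (line-segment) lemma; these are minor stylistic variants of equivalent content, and both are correct.
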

Observe that $(\phi^n)_n$ is asymptotically affine on any singleton, and if it is asymptotically affine on a set then it is asymptotically affine on any of its subsets. $A_x((\phi^n)_n)$ is called the \emph{$(\phi^n)_n$-asymptotically affine component}  of $x$ or, simply, the \emph{$(\phi^n)_n$-component} of $x$. When $(\phi^n)_n$ are fixed we write $A_x$. Indexed by $x\in \bB$, the family forms a partition of $\bB$ in the following sense.
\begin{definition}
For a set $\Gamma\subset \bB$, a family of sets $U_i$, $i\in \cI$ is said to be a \emph{convex partition of $\Gamma$} if for all $i\in\cI$, $U_i$ is convex, relatively open, 
$$\bigcup_{i\in \cI} U_i = \Gamma\quad \textrm{ and }\quad U_i\cap U_k\neq \emptyset \Longrightarrow U_i=U_j,\textrm{ for } i,j\in \cI.$$
\end{definition}
Define
$$\cV=\cV((\phi^n)_n):=\bB \setminus \{x\in \bB:  A_x((\phi^n)_n)=\{x\}\} ,$$ and notice that  $\cV$ equals the set of $x\in \bB$ for which there exist $y,z\in \bB, y\neq z$ such that $ x\in (y,z) \text{ and } (\phi^n)_n  \text{ is  asymptotically affine on } (y,z) $. It follows readily 
from Proposition \ref{prop:def_aff_comp} and its proof that 
\begin{proposition}\label{prop:PropOfA_x}
Let $(\phi^n)_{n\in \bN} \subseteq \cC$. 
The family of sets $\{A_x((\phi^n)_n)\}$ for $x\in \cV((\phi^n)_n)$ (resp.\ for $x\in \bB$) forms a convex partition of $\cV((\phi^n)_n)$ (resp.\ $\bB$). 
\end{proposition}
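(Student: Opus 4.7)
The plan is to deduce everything from the maximality characterization provided by Proposition \ref{prop:def_aff_comp}: for each $x\in \bB$, $A_x((\phi^n)_n)$ is the largest element of $\cO$ that contains $x$ and on which $(\phi^n)_n$ is asymptotically affine. Once this is granted, the partition structure is essentially tautological. I would proceed in three short steps.

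\textbf{Step 1 (basic properties and covering of $\bB$).} Each $A_x$ is convex and relatively open by construction \eqref{defA_x}, and nonempty since the singleton $\{x\}$ belongs to the family under the relative interior and $(\phi^n)_n$ is trivially asymptotically affine on $\{x\}$; Proposition \ref{prop:def_aff_comp} gives $x\in A_x$. Hence $\bigcup_{x\in \bB} A_x = \bB$.

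\textbf{Step 2 (partition property).} Suppose $w\in A_x\cap A_y$. Because $A_x\in \cO$, $w\in A_x$, and $(\phi^n)_n$ is asymptotically affine on $A_x$ (again by Proposition \ref{prop:def_aff_comp}), the maximality characterization applied at $w$ yields $A_x\subseteq A_w$. In particular $x\in A_w$; applying maximality a second time, now at $x$, gives $A_w\subseteq A_x$, so $A_x=A_w$. By symmetry $A_y=A_w$, hence $A_x=A_y$.

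\textbf{Step 3 (covering of $\cV$).} A nonempty relatively open convex set is either a singleton or contains infinitely many points (being relatively open in its affine hull). So for $x\in \cV$ one has $|A_x|>1$, and any $z\in A_x$ satisfies $A_z=A_x$ by Step 2, whence $A_z\neq\{z\}$ and $z\in \cV$. This gives $\bigcup_{x\in\cV}A_x\subseteq \cV$, and the reverse inclusion is trivial from $x\in A_x$.

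The one step that is not purely formal is Step 2, and even there the work is essentially done by Proposition \ref{prop:def_aff_comp}; the real technical content — that the relative interior of the convex hull of the admissible $V$'s is itself admissible, i.e.\ that asymptotic affineness is preserved under convex hulling of sharing-a-point families — is handled inside that earlier proposition rather than here, so I do not expect any obstacle in the present statement beyond invoking it correctly.
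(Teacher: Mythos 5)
Your proof is correct and follows the same route the paper intends — the paper itself gives no separate argument, asserting only that the proposition "follows readily from Proposition \ref{prop:def_aff_comp} and its proof," which is precisely what you carry out. Your Step 2 correctly exploits that Proposition \ref{prop:def_aff_comp} guarantees both that $(\phi^n)_n$ is asymptotically affine on $A_x$ itself and that $A_x$ is maximal, and Step 3's observation that a non-singleton element of $\cO$ is infinite cleanly closes the $\cV$ case.
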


\begin{example}
\label{ }
We give an example of $(\phi^n)_n$-components which shows that, while  in dimension $1$ the $(\phi^n)_n$-components form  a countable partition of $\cV$ made of open intervals, even in dimension two and with constant sequence  $\phi^n=\phi$  the non-trivial $(\phi^n)_n$-components can have less than full dimension and can be uncountable. 
 Let $\Gamma$ be the half disk $\Gamma:=\{ (x,y)\in \bR^2: x^2+y^2\leq 1, y\geq 0\}$ and define $\phi:=d_\Gamma$. Then the family of all $\phi$-components  forms the following partition of $\bR^2$:
$$\{ \accentset{\circ}{\Gamma},  (-1,1)\times \{0\}, (-1,1)\times (-\infty,0) \} \cup \cF \cup \cG \cup \cH^+ \cup \cH^{-} , \text{ where}$$
\begin{itemize}
\item  
$\cF$ is the family of all singletons $\{(x,y)\}$ such that $x^2+y^2=1, y\geq 0$
\item
$\cG$ is the family of all half lines $\{t(x,y):t>1\}$ such that $x^2+y^2=1, y\geq 0$
\item
$\cH^+$ is the family of all half lines $\{(1,0)+t(x,y):t> 0\}$ such that $x^2+y^2=1, y< 0 \leq x$
\item
$\cH^{-}$ is the family of all half lines $\{(-1,0)+t(x,y):t> 0\}$ such that $x^2+y^2=1, y< 0 , x\leq 0$.
\end{itemize} 
\end{example}

 The following key proposition shows that these sets are intimately linked with the components of the domain of $\cM(\mu,\nu)$. 
\begin{proposition}\label{prop:as_aff_comp}
Fix $x\in \bB$ and $(\phi^n)_{n\in \bN} \subseteq \cC$. If $\an{\nu -\mu, \phi^n}\to 0$ then for any $\theta \in \cM(\mu,\nu) $ and disintegration $\theta=\mu\otimes \gamma$, $ \gamma(x, \cdot)$ is concentrated on $\overline{A_x((\phi^n)_n)}$  for $\mu$ a.e. $x$.
\end{proposition}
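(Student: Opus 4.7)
The plan is to combine Jensen's inequality along the martingale disintegration with a convex--extension argument exploiting the maximality of $A_x$.

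\smallskip

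\textbf{Step 1 (Jensen defect).} Disintegrate $\theta = \mu\otimes\gamma$; the martingale condition means $\gamma(x,\cdot)$ has barycenter $x$ for $\mu$-a.e.\ $x$. For each convex $\phi^n$, Jensen gives
\[
 h_n(x) := \int\phi^n\,d\gamma(x,\cdot)-\phi^n(x)\geq 0,\qquad \mu\text{-a.e.}\ x,
\]
and integrating, $\int h_n\,d\mu = \an{\nu-\mu,\phi^n}\to 0$. Thus $h_n\to 0$ in $L^1(\mu)$; after passing to a summable subsequence (which we relabel) we may assume $\sum_n h_n(x)<\infty$ for $\mu$-a.e.\ $x$.

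\smallskip

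\textbf{Step 2 (Normalising $\phi^n$ on $A_x$).} Fix such an $x$ and pick, via Proposition~\ref{prop:def_aff_comp}, affine $a_n$ with $\phi^n-a_n\to 0$ on $A_x$. Set $\psi^n:=\phi^n-a_n$, choose $c_n\in\partial\psi^n(x)$, and define
\[
 \tilde\psi^n(y) := \psi^n(y)-\psi^n(x)-\an{c_n,\,y-x}\in\cC_+,\qquad \tilde\psi^n(x)=0.
\]
Since $A_x$ is relatively open around $x$ in $\aff(A_x)$, applying the support inequality at $y$ and $2x-y\in A_x$ forces $\an{c_n,y-x}\to 0$ on a neighbourhood of $0$ in $\aff(A_x)-x$, so $\tilde\psi^n\to 0$ pointwise on $A_x$. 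The barycenter property of $\gamma(x,\cdot)$ kills both affine terms, yielding the key identity
\[
 \int\tilde\psi^n\,d\gamma(x,\cdot) = h_n(x).
\]

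\smallskip

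\textbf{Step 3 (Pointwise vanishing $\gamma(x,\cdot)$-a.e.).} Summing and applying Tonelli, $\int\sum_n\tilde\psi^n\,d\gamma(x,\cdot)=\sum_n h_n(x)<\infty$, so $\sum_n\tilde\psi^n(y)<\infty$ and $\tilde\psi^n(y)\to 0$ for $\gamma(x,\cdot)$-a.e.\ $y$. Let $F_x:=\{y:\tilde\psi^n(y)\to 0\}$, so $\gamma(x,F_x)=1$.

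\smallskip

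\textbf{Step 4 (Convex extension contradicts maximality).} I show $F_x\subseteq\overline{A_x}$. If some $y\in F_x\setminus\overline{A_x}$ existed, then any $z\in A':=\ri\,\co(A_x\cup\{y\})\in\cO$ can be written $z=\lambda y+(1-\lambda)z'$ with $z'\in A_x$, $\lambda\in[0,1)$, and by convexity
\[
 0\leq\tilde\psi^n(z)\leq\lambda\tilde\psi^n(y)+(1-\lambda)\tilde\psi^n(z')\to 0.
\]
So $\tilde\psi^n\to 0$ pointwise on $A'$; writing $\tilde\psi^n=\phi^n-b_n$ with affine $b_n:=a_n+\psi^n(x)+\an{c_n,\cdot-x}$, this exhibits $(\phi^n)$ as asymptotically affine on $A'\supsetneq A_x$, contradicting the maximality characterisation of $A_x$ in Proposition~\ref{prop:def_aff_comp}. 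Hence $\gamma(x,\overline{A_x}^c)=0$, as required.

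\smallskip

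\textbf{Main obstacle.} The subsequence extraction of Step~1 means the contradiction of Step~4 is formally with the maximality of $A_x$ for the \emph{relabelled} sequence, whereas the conclusion is about $A_x((\phi^n)_n)$ for the original sequence. Since the asymptotically affine component can \emph{grow} strictly when passing to a subsequence (e.g.\ $\phi^n=0$ for even $n$, $\phi^n=n(|x|-1)^+$ for odd $n$), the technical heart of the proof is to avoid this enlargement---either by exploiting the freedom in choosing the summable subsequence, or by weakening the intermediate claim to a form that is stable under subsequence extraction. This subtlety is precisely what the essential-intersection construction motivating $C_x$ in Section~\ref{sec:convex_components} is designed to handle, and the proof of the present proposition is completed there.
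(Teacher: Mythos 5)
Your Steps 1--3 track the paper's argument closely: the paper normalises $\phi^n$ to $\Delta_x\phi^n(y)=\phi^n(y)-\phi^n(x)-\an{\dot\phi^n(x),y-x}\geq 0$ (your $\tilde\psi^n$), observes $\int\Delta\phi^n\,d\theta=\an{\nu-\mu,\phi^n}\to 0$, passes to a subsequence to get $\theta$-a.e.\ convergence, and concludes that $\gamma(x,\cdot)$ is concentrated on the convex set $D_x:=\{y:\Delta_x\phi^n(y)\to 0\}$ for $\mu$-a.e.\ $x$. Up to this point you and the paper agree.

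Step~4 is where the proof genuinely breaks. You want $D_x\subseteq\overline{A_x}$, but this is \emph{false} in general: $D_x$ is a convex set which contains $\overline{A_x}$ and may be strictly larger. A simple example: in $\bR^2$, if $\tilde\psi^n(y)=(y_1)^+$ for all $n$ and $x=0$, then $D_x=\{y_1\le 0\}$ is a half-plane, while $A_x=\{y_1=0\}$ is a line. Your extension argument does not produce a contradiction: if $y\notin\aff(A_x)$, then $A'=\ri\,\co(A_x\cup\{y\})$ typically does \emph{not} contain $x$ --- $x$ sits on the relative boundary of $\co(A_x\cup\{y\})$ --- so $A'$ is not a competitor in the maximality characterisation of $A_x$ (which is over sets in $\cO$ \emph{containing} $x$). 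In the half-plane example, $A'$ would be a $2$-dimensional wedge not containing $0$.

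The missing ingredient is the martingale barycenter condition: you used $b(\gamma(x,\cdot))=x$ in Step~2 only to kill affine terms, but it also constrains \emph{where} $\gamma(x,\cdot)$ can sit inside $D_x$. The paper's Theorem~\ref{ConconA_x} encapsulates exactly this: a probability measure concentrated on a convex set $D$ with barycenter $b$ must in fact be concentrated on the face $F_b(D)$ containing $b$ in its relative interior (because $D\setminus F_b$ is convex, and splitting $\alpha$ across $F_b$ and $D\setminus F_b$ and writing $b$ as a convex combination of the two sub-barycenters violates the face property). Combined with Lemma~\ref{AffCompIsFace}, which identifies $\ri(F_x(D_x))$ with $A_x$ and hence $\overline{F_x(D_x)}$ with $\overline{A_x}$, this gives $\gamma(x,\overline{A_x})=1$. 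Your Step~4 should be replaced by exactly this face/barycenter argument.

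Finally, the ``Main obstacle'' you flag --- that passing to a subsequence can strictly enlarge $A_x$ --- is a genuine subtlety, and the paper's own proof passes to a subsequence in the same place without further comment. You are right that this point deserves care; it interacts with the essential-intersection construction in Section~\ref{sec:convex_components}, but it is not resolved by that construction for the proposition as stated.
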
 Proposition \ref{prop:as_aff_comp} has the following interesting corollary. 
\begin{corollary}
\label{MeasEq}
If $\an{\nu -\mu, \phi^n}\to 0$ for $(\phi^n)_{n\in \bN} \subseteq  \cC $ then 
$\mu_{|\bB \setminus \overline{\cV((\phi^n)_n)}}=\nu_{|\bB \setminus \overline{\cV((\phi^n)_n)}}$.
\end{corollary}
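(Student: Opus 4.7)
The plan is to invoke Proposition \ref{prop:as_aff_comp} to pin down the support of the disintegration kernel of any martingale transport, and then to deduce the equality of measures by a short calculation with the second marginal. Since $\mu\preceq_c\nu$, Strassen's theorem ensures $\cM(\mu,\nu)\neq\emptyset$, so I fix some $\theta=\mu\otimes\gamma\in\cM(\mu,\nu)$ (the disintegration exists because $\bB=\bR^N$ is Polish). Abbreviating $A_x:=A_x((\phi^n)_n)$, $\cV:=\cV((\phi^n)_n)$ and $B:=\bB\setminus\overline{\cV}$ (open, hence Borel), Proposition \ref{prop:as_aff_comp} tells us that for $\mu$-a.e.\ $x$ the probability measure $\gamma(x,\cdot)$ is concentrated on $\overline{A_x}$, and it will suffice to show $\mu(E)=\nu(E)$ for every Borel $E\subseteq B$.

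The key geometric step is the inclusion $\overline{A_x}\subseteq\overline{\cV}$ for every $x\in\overline{\cV}$. When $x\in\cV$, the set $A_x$ is not reduced to $\{x\}$, and the convex partition property of Proposition \ref{prop:PropOfA_x} forces $A_y=A_x$ for every $y\in A_x$; thus $A_y$ is non-trivial, which means $y\in\cV$, so $A_x\subseteq\cV$ and $\overline{A_x}\subseteq\overline{\cV}$. When $x\in\overline{\cV}\setminus\cV$ we simply have $A_x=\{x\}\subseteq\overline{\cV}$. It follows that $\gamma(x,B)=0$ for $\mu$-a.e.\ $x\in\overline{\cV}$. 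Conversely, for every $x\in B$ one has $x\notin\cV$, so $\overline{A_x}=\{x\}$, and hence the probability measure $\gamma(x,\cdot)$ equals $\delta_x$ for $\mu$-a.e.\ $x\in B$.

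Putting the two pieces together, for any Borel $E\subseteq B$,
\begin{equation*}
\nu(E)=\int\gamma(x,E)\,\mu(dx)=\int_B\delta_x(E)\,\mu(dx)+\int_{\overline{\cV}}\gamma(x,E)\,\mu(dx)=\mu(E)+0=\mu(E),
\end{equation*}
which yields $\mu_{|B}=\nu_{|B}$. The main non-trivial input is Proposition \ref{prop:as_aff_comp}; beyond that the only real point is the partition argument $A_x\subseteq\cV$ for $x\in\cV$, and the rest is straightforward bookkeeping with no measurability subtleties, since $\overline{\cV}$ is closed.
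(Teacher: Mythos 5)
Your proof is correct and follows essentially the same route as the paper's: you invoke Proposition \ref{prop:as_aff_comp} to concentrate $\gamma(x,\cdot)$ on $\overline{A_x}$, use the partition property from Proposition \ref{prop:PropOfA_x} to obtain $A_x\subseteq\cV$ for $x\in\cV$ (hence $\gamma(x,\cdot)=\delta_x$ off $\cV$ and $\gamma(x,\cdot)$ supported in $\overline{\cV}$ otherwise), and conclude by computing $\nu(E)$ for $E\subseteq\bB\setminus\overline{\cV}$. The only cosmetic difference is that you split the integral over $\bB\setminus\overline{\cV}$ and $\overline{\cV}$, while the paper splits over $\bB\setminus\cV$ and $\cV$; your choice has the minor advantage of integrating only over manifestly Borel sets.
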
 
\begin{proof}
Recall that by Propositions \ref{prop:PropOfA_x} and \ref{prop:as_aff_comp}, the sets $A_x$ form a convex partition such that for $\theta=\mu\otimes \gamma\in \cM(\mu,\nu)$, $\gamma(x,\overline{A_x})=1$ $\mu(dx)$-a.e.
In particular, $A_x=\{x\}$ if $x\in \bB\setminus \cV$ and $A_x\subseteq \cV$ if $x\in \cV$ so that $ \gamma(x, \cdot)=\delta_x$ for $\mu$ a.e. $x\in \bB\setminus \cV$ and $\gamma(x,\cdot)$ is concentrated on $\bar{\cV}$ for $\mu$ a.e. $x\in \cV$. Consequently, if $B\subseteq \bB $ is Borel we get that 
\begin{align*}
\nu(B)=\int_{\bB} \mu(dx) \gamma(x,B)= \int_\cV \mu(dx) \gamma(x,B\cap \bar{\cV}) +\int_{\bB \setminus \cV} \mu(dx) \delta_x(B)
\end{align*} 
and  if moreover $B\subseteq \bB \setminus \bar{\cV}$ we get 
$ \nu(B)=\int_{\bB \setminus \cV} \mu(dx) 1_B(x)=\mu(B) .$
\end{proof} 

We now easily obtain the result about the convex order stated in the introduction.
\begin{proof}[Proof of Corollary \ref{MeasEqintro}]
 Let $\phi_n$ be the infimal convolution between $\phi$ and $n|| \cdot ||$, then $\phi_n \in \cC$ and $\phi_n \uparrow \phi$ (see \cite[Proposition 2.2.4 Chapter 1 and Proposition 3.1.4 Chapter 4]{HiLe93V1}). 
 Let $a$ be an affine function such that $a\leq \phi_1$ and $\lambda \in \cM$,  
 then $(\phi_n)^- \leq |a| \in L^1(\lambda)$ and $\phi_n^+ \uparrow \phi^+$ and so by dominated convergence and by monotone convergence we get that $\int \phi_n d\lambda \uparrow \int \phi d\lambda$. Since we assumed $\int \phi d \mu=\int \phi d \nu<\infty$ we get that so $\an{\nu -\mu, \phi^n}\to 0$. 
 Since $\phi$ is strictly convex and $\phi_n \to \phi$ we get that $ \cV((\phi^n)_n)=\emptyset$, thus Corollary \ref{MeasEq} gives the thesis.
 \end{proof}

We close this section with application of Proposition \ref{prop:as_aff_comp} to our motivating examples.
\begin{example}[Examples \ref{ex:discrete}--\ref{ex:mixed} continued.]
\label{ex:convex_components}
We continue the discussion of our motivating examples. Recall the pairs of measures in convex order: $\mu^k\preceq_c\nu^k$ and $\tilde \mu^k\preceq_c\nu^k$. 
We study the sets $A((\phi^n))_x$ for $\an{\nu -\mu, \phi^n}\to 0$. In fact, for this example, we restrict out attention to constant sequences $\phi^n=\phi$ with $\an{\nu -\mu, \phi}=0$. Consider $\phi((t,s))=f(t)$ for a strictly convex and \Lz  $f$ so that, in particular, $A(\phi)_{(t,s)}=\{t\}\times \bR$. 
Analogously, consider $\psi((t,s))=g(s)$, where $g$ is  Lipschitz, convex, equal to $0$ on $[-1,1]$ and strictly convex on $(\infty,-1)$ and on $(1,\infty)$. It follows that $A(\psi)_{(t,s)}=\bR\times (-1,1)$ for $s\in (-1,1)$ and $t\in \bR$. 
It is easy to compute the difference of integrals of $\phi$ or $\psi$ against our measures.  First,
$$\an{\nu^k-\mu^k, \phi}=\iint \mu^k(dx) \gamma^k(x,dy) (\phi(y)-\phi(x))=0,$$ 
where $\theta^k=\mu^k\otimes \gamma^k\in\cM(\mu^k,\nu^k)$ was exhibited in Examples \ref{ex:discrete}--\ref{ex:continuous}, and the above follows since $\gamma^k((t,s),\cdot )$ is concentrated on $\{t\} \times \{-1,1\}$ and $\phi$ is constant on $\{t\} \times \bR$.  Second, we have  
 $\an{\nu^k-\mu^k, \psi}=0$ simply since $\mu^k,\nu^k$ are supported on $[0,1]\times [-1,1]$. 
By Proposition \ref{prop:as_aff_comp}, for any $\theta\in \cM(\mu^k,\nu^k)$ with disintegration $\theta=\mu^k\otimes \gamma$ we have $\gamma((t,s),\cdot)$ is supported on 
$$\overline{A(\phi)_{(t,s)}\cap A(\psi)_{(t,s)}}= \{t\}\times [-1,1]$$ $\mu(d(t,s))$--a.e. Combined with our explicit construction of $\gamma^k$ this shows that Theorem \ref{thm:main_result} holds with $C_{(t,s)}=\{t\}\times (-1,1)$ for $s\in (-1,1)$ and $(t,0)$ in the support of $\mu^k$ and $C_{(t,s)}=\{(t,s)\}$ otherwise, $k\leq \infty$.

Similarly to above, $\an{\nu^k-\tilde\mu^k, \psi}=0$ and also $\an{\nu^k-\tilde\mu^k, \chi}=0$, where $\chi((t,s))=h(t)$ for a Lipschitz, convex function $h$ equal to $0$ on $[0,1]$ and strictly convex on $(\infty,0)$ and on $(1,\infty)$ so that $A(\chi)_{(t,s)}=(0,1)\times \bR$ for $t\in (0,1)$, while
 $A(\chi)_{(0,s)}=\{0\}\times \bR$ and $A(\chi)_{(1,s)}=\{1\}\times \bR$ since the point $x$ has to belong to the relative interior of $A(\phi)_x$, see \eqref{defA_x}. It follows that 
$C_{(t,s)}(\tilde\mu^k,\nu^k)\subset (0,1)\times (-1,1)$ for $(t,s)\in (0,1)\times (-1,1)$ and $C_{(0,s)}(\tilde\mu^k,\nu^k)\subset \{0\}\times (-1,1)$, $C_{(1,s)}(\tilde\mu^k,\nu^k)\subset \{1\}\times (-1,1)$. 
 From our example of $\tilde \theta^k\in \cM(\tilde \mu^k,\nu^k)$ we see that the inclusions may not be strict so that the convex components are indeed as asserted in Example \ref{ex:mixed}.
\end{example}

\subsection{Convex components describing support of martingale transports}\label{sec:convex_components}

We saw in Proposition \ref{prop:as_aff_comp} that for any martingale transport $\theta\in\cM(\mu,\nu)$ the mass from $x$ is diffused within $\overline{A_x((\phi^n)_n)}$ for $\mu$--a.e.\ $x$. This holds for any sequence $(\phi^n)_{n\in \bN} \subseteq \cC$ with $\an{\nu -\mu, \phi^n}\to 0$. In consequence, one may be inclined to ask if $\gamma(x,\cdot)$, where $\theta=\mu\otimes \gamma$, is concentrated on the intersection of $\overline{A_x((\phi^n)_n)}$ over all such sequences $(\phi^n)_{n\in \bN} \subseteq \cC$? This, in general, is false. Indeed, for a fixed $x$, we can typically find a sequence $(\phi^n)_{n\in \bN} \subseteq \cC$ such that $\overline{A_x((\phi^n)_n)}$ is too small. 
In other words, the union over all $(\phi^n)_{n\in \bN} \subseteq \cC$ such that  $\an{\nu -\mu, \phi^n}\to 0$ of the  $\mu$-null set $\cN_{\mu}((\phi_n)_n)$ on which it does not happen that 
 $\gamma(x,\cdot)$ is concentrated on $\overline{A_x((\phi^n)_n)}$ is not a $\mu$-null set. To understand this we come back to Examples \ref{ex:discrete}--\ref{ex:mixed}.

\begin{example}[Examples \ref{ex:discrete}--\ref{ex:continuous} continued.]
\label{ex:Ax_toosmall}
We continue the discussion of our motivating examples and their convex components as computed in Example \ref{ex:convex_components}. We argue that $C_z$ may not be defined as the intersection of $A((\phi^n))$ over all sequences with $\an{\nu^k -\mu^k, \phi^n}\to 0$. Fix $x_0\in (0,1)$. Now let $\phi^n((x,y))=(y-n(x-x_0))^+$ so that $\phi^n$ is affine on $\{x\}\times [-1,1]$ for $x\notin (x_0-\frac{1}{n},x_0+\frac{1}{n})$. It follows that $\an{\nu^k -\mu^k, \phi^n}\to 0$ for any $2\leq k\leq \infty$. However $\phi^n((x_0,y))=y^+$ from which we see that $A((\phi^n))_{(x_0,0)}=\{(x_0,0)\}$ and hence
$$\bigcap_{(\phi^n)_{n\in \bN} \subseteq \cC: \an{\nu^k -\mu^k, \phi^n}\to 0}\overline{A((\phi^n))}_{(x,0)}=\{(x,0)\}\subsetneq C_x(\mu^k,\nu^k),\quad 2\leq k\leq \infty.$$
\end{example}

To circumvent the above problem, we would need to restrict ourselves to a suitable countable family of sequences of functions in $\cC$. This can be achieved by considering a suitably defined essential intersection instead of the simple intersection above. To describe our construction we need some additional definitions. Let $CL(\bB)$ be the set of non-empty closed subsets of $\bB$. There is number of well understood topologies one may put on $CL(\bB)$. For our purposes it is most convenient to equip $CL(\bB)$ with the Wijsman topology \cite{Wijsman:66} which is the weak topology generated by mappings $d_{\cdot}(x):CL(\bB)\to \bR$, $x\in \bB$. This topology is weaker than the Vietoris topology and stronger than the Fell topology. To us, is has two main advantages. First, it makes $CL(\bB)$ into a Polish space as shown by Beer \cite{Beer:91}. Second, it generates the Effros $\sigma$--algebra which implies that weak measurability of closed--valued multifunctions can be treated similarly to regular functions, see \cite{Beer:91} and the references therein. Finally, let $\cc$ be the set of closed convex subsets of $\bB$. Then $\cc$ is a closed subset of $CL(\bB)$ and hence also Polish with its Wijsman topology \cite{Wijsman:66}. We equip it with partial ordering given by set inclusion. Then, in a recent work, Larsson \cite{Larsson:17} showed that one can build a strictly increasing measurable map from $\cc$ to $\bR$. With such a map, one can follow the usual arguments, to establish existence of essential infimum of a family of $\cc$--valued random variables, see \cite{Larsson:17} for details. This allows us to give a proper definition of convex component for $\cM(\mu,\nu)$ which avoids the problems highlighted in Example \ref{ex:Ax_toosmall} above.

For a fixed sequence $(\phi^n)_n\in \cC$, we see the mapping
\begin{equation}\label{eq:Axphin_mapping}
 \bB\ni x\to \overline{A_x ((\phi^n)_n)}\in \cc
\end{equation}
as a $\cc$--valued random variable on $(\bB,\cB(\bB),\mu)$. It follows from the structure of Wijsman topology, that the measurability of the above mapping is equivalent to its Borel measurability as a multifunction, see Hess \cite{Hess:86}. We believe this follows readily but leave the details aside\footnote{We plan to add these in the subsequent version of the paper.}.
We are interested in the collection of such variables over 
$$(\phi^n)_n\in I:=\{ (\phi^n)_n \in \cC \text{ such that } \an{\nu-\mu, \phi^n}\to 0 \}.$$
As explained above, we can take their essential infimum with respect to $\mu$ which exists, is unique $\mu$-a.s., measurable and $\cc$--valued. 
Further, it may be obtained as an infimum over a countable family: there exits sequence $(\phi^{k,n})_n\in I$, $k\geq 1$,  such that
\begin{equation}\label{eq:cvcmp_def}
\cnvmap_x(\mu,\nu):=\bigcap_{k\geq 1} \overline{A_x((\phi^{k,n})_n)} 
\end{equation}
satisfies
\begin{equation*}\label{eq:essinfdef}
\cnvmap_x(\mu,\nu)= \mu-\mathrm{essinf}_{(\phi^n)_n\in I} \overline{A_x((\phi^n)_n)} \qquad \mu(dx)\textrm{--a.e.}
\end{equation*}
We now want to define the convex component of $x$ as the largest relatively open convex subset of $\cnvmap_x$ which contains $x$. 
This can be achieved using faces of a convex set, which we also exploit in Section \ref{subsec:convex_face} to characterise the asymptotically convex components $A_x$. We recall here that given a convex set $K\subseteq \bB$ and $x\in K$, 
there exists a unique face\footnote{While we follow \cite{HiLe93V1, Rock:70}, we warn the reader that some other authors simply call `face' what \cite{HiLe93V1, Rock:70} call `exposed face', and that this distinction is important since not all faces are exposed (see immediately after  \cite[Chapter 3, Remark 2.4.4]{HiLe93V1}).} $F_x$ of $K$ that contains $x$ in its relative interior $\ri(F_x)$  (see  \cite[Theorem 18.2]{Rock:70}), which \cite[Theorem 18.1]{Rock:70} shows to be the smallest face of $K$ containing $x$ and, by \cite[Theorem 18.2]{Rock:70},  is also the maximal subset in $\cO$ included in $K$ and containing $x$. 
\begin{definition}\label{def:convex_component}
For $x\in \bB$ the set
$$C_x=\cvcmp:= \ri\left(F_x\left(\cnvmap_x(\mu,\nu)\right)\right)\in \cO$$ 
is called the \emph{convex component of $x$} and the set 
$$\cD=\cD(\mu,\nu):=\{x\in \bB: C_x\neq \{x\}\}$$ is called the \emph{domain}.\\ 
\end{definition}
\begin{remark}
We stress that convex components are defined $\mu(dx)$--a.e. The particular definition in \eqref{eq:cvcmp_def} could be modified on a $\mu$--null set as long as the resulting sets are also a convex partition of $\bB$. 
\end{remark}
\begin{proof}[Proof of Theorem \ref{thm:main_result}]
Recall that $x\in A_x((\phi^n)_n)$ and hence $x\in \cnvmap_x(\mu,\nu)$ and hence Definition \ref{def:convex_component} is well posed. We need to show that the convex components $C_x$, $x\in \bB$, of Definition \ref{def:convex_component} form a convex partition of $\bB$, $x\in C_x$, and that 
for any $\theta \in \cM(\mu,\nu) $ and disintegration $\theta=\mu\otimes \gamma$, $ \gamma(x, \overline{C_x})=1$ $\mu(dx)$-a.e. Note that, by Propositions \ref{prop:PropOfA_x} and \ref{prop:as_aff_comp}, these properties are true for $A_x((\phi^{k,n})_n)$, for each $k\geq 1$. In particular, since $\cnvmap_x=\lim_{K\to \infty} \bigcap_{k\leq K} \overline{A_x((\phi^{k,n})_n)}$, we see that
 $\gamma(x,\cnvmap_x)=1$, $\mu(dx)$-a.e. and, by Theorem \ref{ConconA_x}, $\gamma(x,\overline{C_x})=1$, $\mu(dx)$-a.e.\\
As recalled above, for a convex set $K$ and $x\in K$, $\ri(F_x(K))$ is the largest relatively open set which includes $x$ and is contained in $K$. It follows that for two convex sets $K_1\subseteq K_2$,  $x\in K_1$, we have $\ri(F_x(K_1))\subseteq \ri(F_x(K_2))$. In particular, for any $(\phi^n)_n\in I$
$$C_x=\ri\left(F_x\left(\overline{C}_x(\mu,\nu)\right)\right)\subseteq \ri\left(F_x\left(\overline{A_x((\phi^n)_n)}\right)\right)=A_x((\phi^n)_n),\quad \mu(dx)\textrm{-a.e.},$$
where the last equality follows from the characterisation in Lemma \ref{AffCompIsFace} below. Note that we may take the above to hold for $((\phi^{k,n})_n)$, $k\geq 1$, in \eqref{eq:cvcmp_def}, $\mu(dx)$-a.e. Now suppose $y\in C_x$. Then, by the above inclusion and by Proposition \ref{prop:PropOfA_x}, $A_x((\phi^{n,k})_n)=A_y((\phi^{n,k})_n)$ so, by \eqref{eq:cvcmp_def}, we have $\cnvmap_x=\cnvmap_y$. It now follows that $x,y$ are in the relative interior if the same convex face of this set and hence their convex components are equal. This concludes the proof.
\end{proof}
Finally, observe that Theorem \ref{thm:main_result}, combined with the general argument given for the proof of Corollary \ref{MeasEq}, readily imply the following result.
\begin{corollary}
\label{Mu=NuOutsideD}
$\mu_{|\bB \setminus \bar{\cD}}=\nu_{|\bB \setminus \bar{\cD}}$.
\end{corollary}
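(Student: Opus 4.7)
The plan is to directly imitate the proof of Corollary \ref{MeasEq}, now using the convex components $C_x$ and the domain $\cD$ in place of $A_x((\phi^n)_n)$ and $\cV((\phi^n)_n)$. First, since $\mu\preceq_c\nu$, Strassen's theorem gives some $\theta=\mu\otimes\gamma\in\cM(\mu,\nu)$, and Theorem \ref{thm:main_result} tells us that $\{C_x\}_{x\in\bB}$ is a convex partition of $\bB$ with $x\in C_x$, and that $\gamma(x,\overline{C_x})=1$ for $\mu$-a.e.\ $x$.

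The key observation I would record next is that $C_x\subseteq \cD$ for every $x\in\cD$. Indeed, if $y\in C_x$, the partition property forces $C_y=C_x$; this set is not the singleton $\{y\}$ (either because $C_x\neq\{x\}$ when $y=x$, or because $C_y\supseteq\{x,y\}$ contains two distinct points when $y\neq x$), so $y\in\cD$. Conversely, by definition of $\cD$, $C_x=\{x\}$ whenever $x\in\bB\setminus\cD$. Therefore $\gamma(x,\cdot)$ is concentrated on $\overline{C_x}\subseteq\overline{\cD}$ for $\mu$-a.e.\ $x\in\cD$, while $\gamma(x,\cdot)=\delta_x$ for $\mu$-a.e.\ $x\in\bB\setminus\cD$.

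The conclusion then follows by the same splitting as in the proof of Corollary \ref{MeasEq}: for any Borel $B\subseteq\bB\setminus\overline{\cD}$, I would write
\[
\nu(B)=\int_{\bB}\gamma(x,B)\,\mu(dx)=\int_{\cD}\gamma(x,B\cap\overline{\cD})\,\mu(dx)+\int_{\bB\setminus\cD}\delta_x(B)\,\mu(dx),
\]
and observe that the first integral vanishes (since $B\cap\overline{\cD}=\emptyset$) and the second equals $\mu(B)$ (since $B\subseteq\bB\setminus\overline{\cD}\subseteq\bB\setminus\cD$). As $B$ was arbitrary, $\mu_{|\bB\setminus\overline{\cD}}=\nu_{|\bB\setminus\overline{\cD}}$.

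The argument is essentially mechanical once Theorem \ref{thm:main_result} is available; there is no real obstacle. The only point that requires a short verification rather than a direct quotation is the inclusion $C_x\subseteq\cD$ for $x\in\cD$, which comes for free from the partition structure of $\{C_x\}_{x\in\bB}$.
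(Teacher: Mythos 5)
Your proof is correct and is exactly what the paper has in mind: the paper states Corollary \ref{Mu=NuOutsideD} as an immediate consequence of Theorem \ref{thm:main_result} and ``the general argument given for the proof of Corollary \ref{MeasEq}'', which is precisely the substitution of $C_x$ for $A_x$ and $\cD$ for $\cV$ that you carry out. The only detail you spell out that the paper leaves implicit is the inclusion $C_x\subseteq\cD$ for $x\in\cD$ (mirroring $A_x\subseteq\cV$ for $x\in\cV$), and your verification of it via the partition property is sound.
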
 

\section{Proofs and further properties of affine components}\label{sec:cnv_comp_proofs}
We turn now to proofs of the results announced in Section \ref{sec:cnv_comp_results}.
We first establish Proposition \ref{prop:def_aff_comp} and then prove Proposition \ref{prop:as_aff_comp}  by characterising $A_x$ as the convex face containing $x$. For the latter proof, we establish certain results in convex analysis which are of independent interest, see Theorem \ref{ConconA_x} below. 
 We start however with a simple result, exploited in all of the proofs, which asserts that because of convexity, it is enough to consider very special affine functions to determine whether $(\phi^n)_{n\in \bN} \subseteq \cC$ is  asymptotically affine on a set $V\in \cO$: it is enough to consider affine functions $b_n$ supporting $\phi^n$ at a \emph{fixed}\footnote{I.e. a point $p$ not dependent on $n$; otherwise the result is false.} point $p\in V$. This fact relies crucially on the assumption that\footnote{As it is clear from the proof, Lemma \ref{AnySuppFn} would hold for arbitrary $V$ if we assumed that $p\in \ri((co(V))$.} $V\in \cO$ --- for a counterexample one may consider $V=[0,1), p=0$ and $\phi^n(t)=t^+$, where $\phi_{|V}$ is affine and the  function $b^n:=0$ supports $\phi^n$ at $p$ yet $(\phi^n-b^n)(t)=t$ is not identically $0$ on $V$.

\begin{lemma}
\label{AnySuppFn}
Given $(\phi^n)_{n\in \bN} \subseteq \cC$  and  $p\in V\in \cO$,  choose\footnote{Such an affine function $b_n$ always exists  (i.e. $\partial \phi^n(p)\neq \emptyset$) since $\phi$ is convex: see \cite[Chapter 6, Definition 1.1.4 and Theorem 1.2.2]{HiLe93V1}.}   $b_n$  affine s.t.   $b_n\leq \phi^n$ and $b_n(p)=\phi^n(p)$. 
Then $\phi^n - b_n \to 0$ on $V$ iff $(\phi^n)_n$ is asymptotically affine on $V$.
\end{lemma}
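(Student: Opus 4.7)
The plan is to observe that the forward implication is immediate from Definition \ref{def:as_aff}: if $\phi^n - b_n \to 0$ on $V$, then the sequence of affine functions $(b_n)_n$ itself witnesses asymptotic affineness. The substance of the lemma is the reverse implication, which I would prove as follows.

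Assume $(\phi^n)_n$ is asymptotically affine on $V$, so there exist affine functions $(a_n)_n$ with $\phi^n - a_n \to 0$ on $V$. The first step is a cosmetic normalisation at $p$: since $p \in V$, the constants $c_n := \phi^n(p) - a_n(p)$ tend to $0$, so replacing $a_n$ by $\tilde a_n := a_n + c_n$ preserves $\phi^n - \tilde a_n \to 0$ on $V$ while ensuring $\tilde a_n(p) = \phi^n(p) = b_n(p)$. Define the affine function $L_n := b_n - \tilde a_n$; then $L_n(p) = 0$, and since $\phi^n - b_n = (\phi^n - \tilde a_n) - L_n$, the proof reduces to showing $L_n \to 0$ pointwise on $V$.

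For the upper bound, the supporting property $b_n \leq \phi^n$ yields $L_n \leq \phi^n - \tilde a_n$, so $\limsup_n L_n(x) \leq 0$ for each $x \in V$. For the lower bound I would use that $V \in \cO$ is relatively open, which means $p \in V = \ri(V)$. Hence for any $x \in V$ one can extend the segment $[x,p]$ slightly beyond $p$ while remaining in $V$: there exist $\epsilon > 0$ and $y \in V$ with
$$p = \frac{1}{1+\epsilon} x + \frac{\epsilon}{1+\epsilon} y.$$
Applying the affine function $L_n$ to this identity and using $L_n(p) = 0$ gives $L_n(x) = -\epsilon L_n(y)$. The upper bound applied at $y \in V$ then forces $\liminf_n L_n(x) \geq 0$, and combining both bounds yields $L_n(x) \to 0$ as required.

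The only real subtlety — and the point where the hypothesis $V \in \cO$ is genuinely used — is the lower bound: the supporting hypothesis $b_n \leq \phi^n$ is one-sided, so to convert it into two-sided control on all of $V$ one must be able to see each $x \in V$ from $p$ on both sides within $V$, i.e.\ $p$ must lie in the relative interior of $V$. The counterexample flagged in the footnote ($V = [0,1)$, $p = 0$, $\phi^n(t) = t^+$) is precisely the case where this fails, confirming that the relative openness of $V$ is the essential ingredient.
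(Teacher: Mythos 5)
Your proof is correct, and it takes a genuinely different and more elementary route than the paper's. The paper, after reducing to the affine hull so that $V$ is open, invokes \cite[Chapter 6, Theorem 6.2.7]{HiLe93V1} (a convergence-of-subdifferentials result for sequences of convex functions converging on an open set) applied to $f_n := \phi^n - a_n$ to conclude that $\max\{\|d\| : d\in\partial f_n(p)\}\to 0$; from $\nabla b_n(p)\in\partial\phi^n(p)$ it then deduces $\nabla b_n(p)-\nabla a_n(p)\to 0$, and together with $b_n(p)-a_n(p)\to 0$ and affineness this forces $b_n-a_n\to 0$ on all of $\aff(V)$, whence $\phi^n-b_n\to 0$ on $V$. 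Your proof avoids any appeal to subdifferential convergence: after normalising $a_n$ at $p$, you isolate the affine error $L_n = b_n - \tilde a_n$ (which vanishes at $p$), obtain the one-sided bound $\limsup L_n \le 0$ on $V$ from the supporting property $b_n\le\phi^n$, and then convert it into a two-sided bound at each $x\in V$ by writing $p$ as a strict convex combination of $x$ and some $y\in V$ on the far side of $p$, using affineness of $L_n$ so that $L_n(x)=-\epsilon L_n(y)$. This puts the role of $V\in\cO$ into sharp relief exactly where it belongs (without relative openness one cannot pass to the far side of $p$), and it is self-contained, using only convexity, relative openness, and the elementary identity for an affine function on a segment. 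The paper's argument, in exchange for relying on a cited theorem, extracts uniform control on the subgradients at $p$, which is slightly more information than is needed; both arrive at the same conclusion.
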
 
\begin{proof}
By definition if $\phi^n - b_n \to 0$ on $V$ then $(\phi^n)_n$ is asymptotically affine on $V$. Conversely, let $a_n$ be affine and such that $\phi^n -a_n \to 0$ on $V$.
If $V$ is a singleton then trivially $\phi^n - b_n \to 0$ on $V$. If  $V$ is not a singleton by restricting ourselves to its affine hull we can assume w.l.o.g. that $V$ is open (in $\bR^N$ with $N\geq 1$).
We can then apply \cite[Chapter 6, Theorem 6.2.7]{HiLe93V1}  to $f_n=\phi^n -a_n$ and get that $\max \{ ||d||_{\bB}: d\in \partial f_n(p) \} \to 0$. Since $\phi^n -b_n\in \cC_+$ equals $0$ at $p$, it achieves it minimum $0$ at $p$, and so  $0 \in \partial (\phi^n-b_n)(p)$, i.e. $\nabla b_n(p)\in \partial \phi^n(p)$. This gives that $\nabla b_n(p) - \nabla a_n(p) \in \partial f_n(p)$, so we get that  $\nabla b_n(p) - \nabla a_n(p) \to 0$. Since $a_n,b_n$ are affine and $b_n(p)- a_n(p) = \phi^n(p)-a_n(p) \to 0$ we get that  $b_n-a_n\to 0$ on $\bB$, so $\phi^n - b_n=(\phi^n - a_n) +(a_n - b_n) \to 0+0=0$ on $V$.
\end{proof}

\subsection{Proof of Proposition \ref{prop:def_aff_comp}}
Proposition \ref{prop:def_aff_comp} follows from the series of lemmas below:
Lemmas \ref{AsympAffOnCo} and \ref{ConvComp} imply that $(\phi^n_{|A_x})_n$ is asymptotically affine and  $x\in A_x \in \cO$, and Lemma \ref{VinC} gives that if  $x\in V \in \cO$ and $(\phi^n_{|V})_n$ is asymptotically affine then $V\subseteq A_x$, which completes the proof. 
\begin{lemma}
\label{AsympAffOnCo}
If $(\phi^n)_{n\in \bN} \subseteq \cC$ is asymptotically affine on  $ V_i\in \cO$ for each $i\in I$ and $\cap_{i\in I} V_i\neq \emptyset$ then $(\phi^n)_n$ is asymptotically affine on $\co(\cup_{i\in I} V_i)$.
\end{lemma}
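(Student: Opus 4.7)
The plan is to reduce everything to a single, canonically chosen affine minorant and then exploit convexity. First I would fix a point $p \in \cap_{i\in I} V_i$, which is nonempty by hypothesis, and, since each $\phi^n$ is convex, choose by subdifferentiability an affine $b_n$ with $b_n \leq \phi^n$ on $\bB$ and $b_n(p) = \phi^n(p)$. Because $p \in V_i \in \cO$, Lemma \ref{AnySuppFn} applied on each $V_i$ separately yields $\phi^n - b_n \to 0$ pointwise on $V_i$, hence on the union $\cup_{i\in I} V_i$.

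Next I would record the two key structural features of the sequence $(\phi^n - b_n)_n$: each term is convex (difference of a convex and an affine function) and nonnegative on all of $\bB$ (since $b_n$ is a global minorant of $\phi^n$).

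Then the extension from the union to its convex hull becomes immediate. Given any $q \in \co(\cup_i V_i)$, Carath\'eodory lets us write $q = \sum_{j=1}^m \lambda_j q_j$ with $q_j \in V_{i_j}$, $\lambda_j \geq 0$, $\sum_j \lambda_j = 1$. By convexity of $\phi^n - b_n$ and its nonnegativity,
$$0 \;\leq\; \phi^n(q) - b_n(q) \;\leq\; \sum_{j=1}^m \lambda_j \bigl(\phi^n(q_j) - b_n(q_j)\bigr) \;\longrightarrow\; 0 \quad \text{as } n \to \infty,$$
so $\phi^n - b_n \to 0$ on $\co(\cup_i V_i)$. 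This is precisely asymptotic affinity on that set, which concludes the argument.

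The only delicate point, which is really the whole content of the lemma, is ensuring that a \emph{single} sequence $(b_n)_n$ witnesses asymptotic affinity on every $V_i$ simultaneously. Naively combining the individual affine approximants $a_n^{(i)}$ attached to each $V_i$ does not work, since they may differ by sequences of affine functions that fail to vanish off $V_i$; it is the common-point hypothesis $\cap_i V_i \neq \emptyset$, channelled through Lemma \ref{AnySuppFn}, that lets us anchor a uniformly chosen $b_n$ as a valid substitute on each $V_i$. Once this unification is achieved, nonnegativity plus convexity of $\phi^n - b_n$ propagate the limit from points to convex combinations essentially for free.
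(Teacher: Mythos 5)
Your proof is correct and takes essentially the same route as the paper's: fix $p\in\cap_i V_i$, anchor a single supporting affine $b_n$ at $p$ via Lemma~\ref{AnySuppFn}, observe that $\phi^n-b_n$ is convex and nonnegative, and push the pointwise limit from $\cup_i V_i$ to its convex hull via convex combinations. The only cosmetic difference is that you invoke Carath\'eodory's theorem whereas the paper simply cites the characterisation of $\co(\cdot)$ as the set of all finite convex combinations; either suffices.
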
 
\begin{proof}
By applying Lemma \ref{AnySuppFn} we can choose an affine $b_n$ supporting $\phi^n$ at $p\in \cap_{i\in I} V_i$ and get that $\phi^n - b_n\to 0$ on each $V_i$ and so on $\cup_{i\in I} V_i$. If $\{i_1,\ldots, i_n\}\subseteq I$, $(t_{i_j})_{j=1,\ldots, n}$ are non-negative and such that $\sum_{j} t_{i_j}=1$, and $x_{i_j}\in V_{i_j}$ we get that 
$$ \textstyle 
0\leq (\phi^n - b_n)(\sum_{j=1}^n t_{i_j} x_{i_j}) \leq \sum_{j=1}^n t_{i_j}  (\phi^n - b_n)(x_{i_j}) \to 0 .$$
The thesis follows since the set of all points of the form $\sum_{j=1}^n t_{i_j} x_{i_j}$  is  $\co(\cup_{i\in I} V_i)$: see \cite[Chapter 3, Example 1.3.5]{HiLe93V1}. 
\end{proof}

 \begin{lemma}
\label{ConvComp}
 If  $V_i\in \cO$  for each $i\in I$ and $x\in \cap_{i\in I} V_i $ then $\cap_{i\in I} \overline{V_i} =\overline{\cap_{i\in I} V_i }$  and  $x\in \ri(\co(\cup_{i\in I} V_i))\in \cO$.
\end{lemma}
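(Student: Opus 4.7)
The plan is to prove the two claims separately, each using a standard tool from convex analysis: the accessibility lemma (if $K$ is convex, $p\in \ri(K)$ and $q\in \overline{K}$, then $[p,q)\subseteq \ri(K)$) and the line-extension characterisation of the relative interior ($x\in \ri(K)$ iff for every $y\in K$ there exist $z\in K$ and $\lambda\in(0,1)$ with $x=\lambda y+(1-\lambda)z$). Both are in \cite[Chapter 3]{HiLe93V1}.

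For the equality $\bigcap_{i\in I}\overline{V_i}=\overline{\bigcap_{i\in I}V_i}$, the inclusion $\supseteq$ is immediate. For $\subseteq$, fix $y\in\bigcap_i \overline{V_i}$. Each $V_i$ is convex and relatively open, so $V_i=\ri(V_i)$ and in particular $x\in\ri(V_i)$. The accessibility lemma applied to each $V_i$ gives $[x,y)\subseteq V_i$ for every $i\in I$, hence $[x,y)\subseteq \bigcap_i V_i$; letting $t\uparrow 1$ along $x+t(y-x)$ shows $y\in\overline{\bigcap_i V_i}$.

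For the second claim, set $W:=\co(\bigcup_i V_i)$; as the convex hull of a set containing $x$, it is convex and non-empty, so $\ri(W)$ is a non-empty convex and relatively open set, i.e.\ $\ri(W)\in\cO$. It remains to check $x\in\ri(W)$. By the line-extension characterisation, it suffices to show that for every $y\in W$ there exists $z\in W$ with $x\in(y,z)$. Using Carath\'eodory, write $y=\sum_{j=1}^n t_j y_j$ with $t_j\geq 0$, $\sum_j t_j=1$, and $y_j\in V_{i_j}$. Since $x\in V_{i_j}=\ri(V_{i_j})$, for each $j$ there exists $\varepsilon_j>0$ with $x+\varepsilon_j(x-y_j)\in V_{i_j}$; set $\varepsilon:=\min_j \varepsilon_j>0$. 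By convexity of $V_{i_j}$, the point $z_j:=x+\varepsilon(x-y_j)$ lies in $V_{i_j}$, and then
\[
z:=\sum_{j=1}^n t_j z_j = x+\varepsilon(x-y)\in W,
\]
while $x=\tfrac{1}{1+\varepsilon}\,y+\tfrac{\varepsilon}{1+\varepsilon}\,z\in(y,z)$, as required.

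Neither part is technically hard; the main point to respect is that the assumption $x\in\bigcap_i V_i$ is used essentially in both arguments (to invoke accessibility at a common relative interior point, and to simultaneously extend past $x$ within each $V_{i_j}$). A mild subtlety is taking $\varepsilon:=\min_j\varepsilon_j$ over the \emph{finitely many} indices coming from the Carath\'eodory representation of $y$, which is what makes the extension argument work uniformly for an arbitrary index set $I$.
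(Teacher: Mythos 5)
Your proof of the first part (the closure identity) takes essentially the same route as the paper: both invoke the accessibility lemma at the common point $x$ to place the half-open segment between $x$ and $y$ inside $\bigcap_i V_i$. For the second part, however, the paper argues by contradiction via a supporting hyperplane: it reduces to the case where $C=\co(\bigcup_i V_i)$ has full dimension in its affine hull, assumes $x\notin\ri(C)$, takes a closed half-space $H^v_x$ through $x$ containing $C$, and then uses the relative openness of some $V_{\bar i}\ni x$ to produce a point on the wrong side of $\partial H^v_x$. Your argument is instead direct: you use the line-extension criterion for $\ri$ and reduce, via Carath\'eodory, to extending past $x$ in finitely many $V_{i_j}$ simultaneously, which is where the relative openness of each $V_{i_j}$ and the common point $x$ are used. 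Both are clean; your version avoids the reduction to full dimension and the separating-hyperplane machinery, at the cost of invoking Carath\'eodory. One small slip: in the last display you have the coefficients swapped. Since $z=x+\varepsilon(x-y)$, the correct convex combination is
\[
x=\tfrac{\varepsilon}{1+\varepsilon}\,y+\tfrac{1}{1+\varepsilon}\,z,
\]
not $\tfrac{1}{1+\varepsilon}\,y+\tfrac{\varepsilon}{1+\varepsilon}\,z$; the conclusion $x\in(y,z)$ is unaffected.
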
 
\begin{proof}
Trivially $\cap_{i\in I} V_i $ is included in the closed set $\cap_{i\in I} \overline{V_i} $, and so is its closure. For the opposite inclusion take $y\in \cap_{i\in I} \overline{V_i}$, then \cite[Lemma 2.1.6]{HiLe93V1} gives that $(y,x]\subseteq \cap_{i\in I} V_i $ and thus 
 $y \in \overline{\cap_{i\in I} V_i }$, proving that $\cap_{i\in I} \overline{V_i} =\overline{\cap_{i\in I} V_i }$.
 
 The set  $C:=\co(\cup_{i\in I} V_i)$ is  convex and $x\in C$, so the  relative interior $\ri(C)$ of $C$ is convex and relatively open. 
Now by restricting our attention to $\aff(C)=\aff(\cup_{i\in I} V_i)$ we assume w.l.o.g. that $C$ has full dimension $N$. 
Assume by contradiction that $x\in C\setminus \ri(C)$; then  there exists $v\in \bB\setminus \{0\}$ such that the closed half space $H_x^v:=\{ y: \an{y-x,v}\geq 0\}$ contains $C$ (see \cite[Lemma 4.2.1]{HiLe93V1}). Since $C=\co(\cup_{i\in I} V_i)$ has full dimension, it is not contained in the hyperplane $\partial H_x^v$, so there exists $\bar{i}\in I$  and  $y \in  V_{\bar{i}}$ such that $ y \notin \partial H_x^v$. Since $[y,x]\subseteq V_{\bar{i}}\in \cO$ there exists $z\in V_{\bar{i}} \cap (\overrightarrow{[y,x)} \setminus [y,x])$, which is absurd since 
$H_x^v \cap (\overrightarrow{[y,x)} \setminus [y,x])=\emptyset$ and $ V_{\bar{i}} \subseteq C_x \subseteq H_x^v$. Thus $x\in \ri(C)$.
\end{proof} 

\begin{lemma}
\label{VinC}
Assume that $C \subseteq \bB$ is convex and $V\in \cO, V\subseteq C$. If $V\cap \ri(C) \neq \emptyset$ then $V \subseteq \ri(C)$.
\end{lemma}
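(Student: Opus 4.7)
The plan is to invoke the classical line-segment principle for the relative interior of a convex set, namely: if $p\in \ri(C)$ and $z\in \overline{C}$, then $[p,z)\subseteq \ri(C)$. This is exactly the type of statement already used in the proof of Lemma~\ref{ConvComp} via \cite[Chapter 3, Lemma 2.1.6]{HiLe93V1} (which says that if $W$ is convex, relatively open, $y\in \overline{W}$ and $x\in W$, then $(y,x]\subseteq W$; apply this with $W=\ri(C)$, using $\overline{\ri(C)}=\overline{C}$).

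First I would fix $p\in V\cap \ri(C)$ and take an arbitrary $x\in V$, with the aim of showing $x\in \ri(C)$. If $x=p$ there is nothing to prove, so assume $x\neq p$. Since $V\in \cO$ is relatively open, both $p$ and $x$ lie in $\aff(V)$, and $V$ contains a neighbourhood of $x$ in the affine line $\overleftrightarrow{(p,x)}\subseteq \aff(V)$. This lets me extend the segment $[p,x]$ slightly past $x$ while staying inside $V$, producing $z\in V$ such that $x\in (p,z)$.

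Then, using $z\in V\subseteq C\subseteq \overline{C}=\overline{\ri(C)}$ (the last equality being valid because $\ri(C)\neq \emptyset$, as it contains $p$), the line-segment principle applied to $\ri(C)$ with $p\in \ri(C)$ and $z\in \overline{\ri(C)}$ gives $[p,z)\subseteq \ri(C)$, hence $x\in (p,z)\subseteq \ri(C)$, which is the desired conclusion.

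There is no substantive obstacle: the only delicate point is producing the extension $z\in V$ past $x$, and this is precisely what the hypothesis $V\in \cO$ (convex and relatively open, in particular open in $\aff(V)$) is there to provide. Note the analogy with the counterexample pointed out immediately before Lemma~\ref{AnySuppFn} (with $V=[0,1)$ and $p=0$), which shows that without relative openness of $V$ the conclusion would fail: the whole argument hinges on being able to push past $x$ in $V$ along the line through $p$.
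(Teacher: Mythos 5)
Your proof is correct and is essentially the same argument as the paper's, just presented directly rather than by contradiction: both exploit the relative openness of $V$ to extend the segment from the $\ri(C)$-point slightly past the point of interest while staying inside $V\subseteq C$, and both then invoke the line-segment principle \cite[Chapter~3, Lemma~2.1.6]{HiLe93V1} to conclude. The only cosmetic difference is that the paper takes $z\in V\setminus\ri(C)$ and derives a contradiction, whereas you take an arbitrary $x\in V$ and show $x\in\ri(C)$ directly.
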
 
\begin{proof}
Assume by contradiction that there exist  $y\in V\cap \ri(C), z\in  V \setminus \ri(C)$, then 
$C \cap (\overrightarrow{[y,z)} \setminus [y,z]) =\emptyset$ since otherwise $z$ would belong to $\ri(C)$ (see \cite[Lemma 2.1.6]{HiLe93V1}). However  $V\in \cO$ gives that $V \cap (\overrightarrow{[y,z)} \setminus [y,z]) \neq \emptyset$, contradicting $ V\subseteq C$.
\end{proof} 

\subsection{Characterisation of $A_x$ as a convex face \& proof of Proposition \ref{prop:as_aff_comp}}\label{subsec:convex_face}
We turn now to the proof of Proposition \ref{prop:as_aff_comp} which relies on a characterisation of $A_x$  in terms familiar to the convex analyst. We will repeatedly use without further notice the fact that, given a convex set $D\subseteq \bB$ and $x\in D$, 
there exists a unique face\footnote{While we follow \cite{HiLe93V1, Rock:70}, we warn the reader that some other authors simply call `face' what \cite{HiLe93V1, Rock:70} call `exposed face', and that this distinction is important since not all faces are exposed (see immediately after  \cite[Chapter 3, Remark 2.4.4]{HiLe93V1}).} $F_x$ of $D$ that contains $x$ in its relative interior $\ri(F_x)$  (see  \cite[Theorem 18.2]{Rock:70}), which \cite[Theorem 18.1]{Rock:70} shows to be the 
the smallest face of $D$ containing $x$ (which exists since trivially any intersection of faces of $D$ is a face of $D$) and which is given by 
\begin{align}
\label{Face_x}
 F_x=F_x(D)=\{y\in D : \exists z\in D , t\in (0,1)  \text{ such that }  x=ty+(1-t)z\} ,
\end{align} 
 see\footnote{The given reference  \cite{LuJaMaNeSp10} assumes that $D$ is compact at the beginning of Section 2.3 (which contains Corollary 2.67), but clearly this is not needed to show our \eqref{Face_x}. Moreover  \cite[Corollary 2.67]{LuJaMaNeSp10} contains a small typo (clearly $t=\lambda$ should belong to $(0,1)$, not $[0,1)$).} \cite[Corollary 2.67]{LuJaMaNeSp10}.
\begin{lemma}
\label{AffCompIsFace}
Given $(\phi^n)_{n\in \bN} \subseteq  \cC $ let $b^n$ be affine and s.t.   $b^n\leq \phi^n$ and $b^n(x)=\phi^n(x)$.
Then the face $F_x=F_x((\phi^n)_n)$ of the convex set $\{y:(\phi^n-b^n)(y)\to 0 \}$ for which $x\in \ri(F_x)$  satisfies $A_x=\ri(F_x)$ and  $\overline{A_x}=\overline{F_x}$.
\end{lemma}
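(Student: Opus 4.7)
The plan is to apply Lemma \ref{AnySuppFn} to reduce asymptotic affineness to the pointwise convergence $\phi^n-b^n\to 0$, and then identify $A_x$ using the standard fact that $\ri(F_x)$ is the maximal relatively open convex subset of the convex set
$$D:=\{y:(\phi^n-b^n)(y)\to 0\}$$
containing $x$. The first step is to verify that $D$ is convex and contains $x$: since $b^n\leq \phi^n$ and $b^n(x)=\phi^n(x)$, we have $\phi^n-b^n\geq 0$ everywhere and vanishes at $x$, and for $y_1,y_2\in D$ and $t\in[0,1]$,
$$0\leq (\phi^n-b^n)(ty_1+(1-t)y_2)\leq t(\phi^n-b^n)(y_1)+(1-t)(\phi^n-b^n)(y_2)\to 0,$$
so $ty_1+(1-t)y_2\in D$. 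In particular $x\in D$, so the unique face $F_x$ of $D$ with $x\in\ri(F_x)$ is well defined.

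Next, Lemma \ref{AnySuppFn} applied to every $V\in\cO$ containing $x$ gives that $(\phi^n_{|V})_n$ is asymptotically affine iff $\phi^n-b^n\to 0$ on $V$, iff $V\subseteq D$. Hence, writing $\cW:=\{V\in\cO:x\in V,\,V\subseteq D\}$, the defining formula \eqref{defA_x} becomes
$$A_x=\ri\bigl(\co\bigl(\textstyle\bigcup\cW\bigr)\bigr).$$
I then invoke the characterisation recalled just before Definition \ref{def:convex_component} (from \cite[Theorem 18.2]{Rock:70}): $\ri(F_x)$ is the maximal element of $\cO$ that contains $x$ and is included in the convex set $D$. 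This has two consequences. First, every $V\in\cW$ satisfies $V\subseteq \ri(F_x)$; convexity of $\ri(F_x)$ then yields $\co(\bigcup\cW)\subseteq \ri(F_x)$, so $A_x\subseteq\ri(F_x)$. Second, $\ri(F_x)$ is itself a member of $\cW$ (non-empty since it contains $x$, convex, relatively open, and contained in $F_x\subseteq D$), so $\ri(F_x)\subseteq\bigcup\cW\subseteq \co(\bigcup\cW)$; relative openness of $\ri(F_x)$ then gives $\ri(F_x)\subseteq\ri(\co(\bigcup\cW))=A_x$. Combining these, $A_x=\ri(F_x)$.

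The closure identity is then immediate from the standard finite-dimensional fact that $\overline{\ri(F)}=\overline{F}$ for any convex set $F$ (see, e.g., \cite[Chapter III]{HiLe93V1}), which yields $\overline{A_x}=\overline{\ri(F_x)}=\overline{F_x}$. The main subtlety I anticipate is that $D$ need not be closed, which could cast doubt on applying the face/relative-interior decomposition; however, that result requires only convexity of $D$, so it applies without issue. The only other point requiring care is verifying that $\ri(F_x)$ itself lies in the family $\cW$ used to define $A_x$, but this is a direct check from the properties of the relative interior of a face.
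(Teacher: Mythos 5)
Your proof is correct and follows essentially the same route as the paper's: both reduce, via Lemma \ref{AnySuppFn}, to the convex set $D=\{y:(\phi^n-b^n)(y)\to 0\}$ and then invoke \cite[Theorem 18.2]{Rock:70} to identify $A_x$ with the maximal relatively open convex subset of $D$ through $x$, namely $\ri(F_x)$. One small point of care in your last step: the inclusion $\ri(F_x)\subseteq\ri\bigl(\co\bigl(\bigcup\cW\bigr)\bigr)$ does not follow from relative openness of $\ri(F_x)$ alone; it needs that both sets contain $x$ so that Lemma \ref{VinC} applies, or, more simply, your two displayed inclusions already give $\co\bigl(\bigcup\cW\bigr)=\ri(F_x)$, whence $A_x=\ri\bigl(\ri(F_x)\bigr)=\ri(F_x)$.
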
 
\begin{proof}
Since $b^n \leq \phi^n$, the set $D:=\{y:(\phi^n-b^n)(y)\to 0 \}$ is convex.
According to \cite[Theorem 18.2]{Rock:70}  the relative interiors of its non-empty faces constitute the maximal subsets of $D$ in $\cO$.
Thus  
  $\ri(F_x)= A_x$ by  Lemma \ref{AnySuppFn} and by maximality of $F_x$ and of $A_x$.
Since any convex set $C$ satisfies $\overline{C}=\overline{\ri(C)}$ (see \cite[Chapter 3, Proposition 2.1.8]{HiLe93V1}) we get 
$\overline{F_x}=\overline{\ri(F_x)}= \overline{A_x}$.
\end{proof} 

An interesting by-product  of Lemma \ref{AffCompIsFace} is the fact that the relative interior of the face of $\{ \phi^n -b^n \to 0\}$ which contains $x$ does not change if we choose a different supporting function $b^n$ (even if this changes $\{ \phi^n -b^n \to 0\}$). To better understand this, consider the following  simple example in dimension one and with constant sequences  $\phi^n=\phi$ and $b^n=b$. Let $x=0$ and $\phi(t)=t^+$, then  $b_1:=0$ and $b_2(t)=t/2$ both support $\phi$ at $x$ and $\{ \phi=b_1\}=(-\infty,0]$ is very different from $\{ \phi=b_2\}=\{0\}$, yet $A_{0}(\phi)=\{0\}$ is  the face $F_x$ of $D$ which contains $x=0$ in its relative interior both for $D=(-\infty,0]$ and for $D=\{0\}$.

 \begin{theorem}
\label{ConconA_x}
Let $\alpha \in \MN_1$ be concentrated on a convex set  $D\subseteq \bB$.
Let $b(\alpha):=\int y \alpha(dy)$ be the barycenter of $\alpha$ and  $F_{b(\alpha)}$   the face of $D$  that contains $b(\alpha)$ in its relative interior. Then 
 $b(\alpha)\in D$ and  $\alpha$ is concentrated on $F_{b(\alpha)}$.
\end{theorem}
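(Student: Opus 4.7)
The plan is to prove the statement by induction on $k := \dim(\aff(D))$. The base case $k = 0$ is immediate: $D = \{d_0\}$ is a singleton, so $\alpha = \delta_{d_0}$, whence $b(\alpha) = d_0 \in D$ and $F_{b(\alpha)}(D) = D$. For the inductive step, I would first reduce to the case $\aff(D) = \bB$: since $\alpha$ is concentrated on $\aff(D)$, so is its barycenter, so one may replace the ambient $\bB$ by $\aff(D)$. In this reduced setting $\ri(D) = \mathrm{int}(D)$.

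Next I would establish $b(\alpha) \in \overline{D}$ by a standard separation argument: for any closed half-space $\{a \geq 0\} \supseteq D$, one has $a(b(\alpha)) = \int a \, d\alpha \geq 0$ since $a \geq 0$ $\alpha$-a.e., and $\overline{D}$ is the intersection of all such half-spaces. Then I split into two cases. If $b(\alpha) \in \mathrm{int}(D) = \ri(D)$, then $F_{b(\alpha)}(D) = D$ itself (every interior point lies in the relative interior of $D$ viewed as a face of $D$, and by uniqueness no smaller face can work, since every proper face of $D$ is contained in $\overline{D}\setminus\mathrm{int}(D)$), so the conclusion is immediate. Otherwise $b(\alpha) \in \overline{D} \setminus \mathrm{int}(D)$, and the supporting hyperplane theorem yields an affine $a$ with $a(b(\alpha)) = 0$ and $a \geq 0$ on $\overline{D}$. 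Combined with $\int a \, d\alpha = a(b(\alpha)) = 0$, this forces $a = 0$ $\alpha$-a.e., so $\alpha$ is concentrated on $D' := D \cap \{a = 0\}$.

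The crucial observation is that $D'$ is a face of $D$ in the sense of \eqref{Face_x}: if $x \in D'$ and $x = ty + (1-t)z$ with $y, z \in D$ and $t \in (0,1)$, then $0 = a(x) = t\, a(y) + (1-t)\, a(z)$ together with $a(y), a(z) \geq 0$ forces $a(y) = a(z) = 0$, so $y, z \in D'$. Since $D' \subseteq \{a = 0\}$, we have $\dim(\aff(D')) < \dim(\aff(D))$, so the induction hypothesis applied to $\alpha$ viewed as a measure on $D'$ yields $b(\alpha) \in D' \subseteq D$ and $\alpha$ concentrated on $F_{b(\alpha)}(D')$.

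The proof is then completed by the identification $F_{b(\alpha)}(D') = F_{b(\alpha)}(D)$: a direct check on \eqref{Face_x} shows that any face of a face of $D$ is itself a face of $D$, so $F_{b(\alpha)}(D')$ is a face of $D$ containing $b(\alpha)$ in its relative interior, and by uniqueness of such a face it must coincide with $F_{b(\alpha)}(D)$. The only delicate step is this final face-of-face identification together with the verification that $D \cap \{a = 0\}$ is a face of $D$ (which relies crucially on the sign property $D \subseteq \{a \geq 0\}$); everything else amounts to a clean combination of induction with the separating and supporting hyperplane theorems.
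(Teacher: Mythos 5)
Your proof is correct, and it takes a genuinely different route from the paper's. The paper argues by contradiction in one shot: writing $C := D\setminus F_{b(\alpha)}$, which is convex by the face property, it supposes $\alpha(C)>0$, decomposes $\alpha = \lambda\beta + (1-\lambda)\gamma$ with $\beta,\gamma$ the normalized restrictions to $C$ and $F_{b(\alpha)}$, invokes a cited result (from Fetter--Gamboa de Buen) asserting that a finite-first-moment measure concentrated on a convex set has its barycenter in that set to conclude $b(\beta)\in C$, $b(\gamma)\in F_{b(\alpha)}$, and then contradicts $b(\beta)\in C$ via $b(\alpha)=\lambda b(\beta)+(1-\lambda)b(\gamma)\in F_{b(\alpha)}$ and the face property. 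That argument is shorter but leans on the external lemma both for the claim $b(\alpha)\in D$ and for the two internal barycenter claims. Your induction on $\dim\aff(D)$, with the supporting-hyperplane cut $D' = D\cap\{a=0\}$ and the elementary observations that $D'$ is a face of $D$ and that a face of a face is a face (so $F_{b(\alpha)}(D') = F_{b(\alpha)}(D)$ by uniqueness of the face having $b(\alpha)$ in its relative interior), reproves that auxiliary lemma from scratch en route and hence is self-contained, at the cost of being somewhat longer. Both proofs correctly contend with the fact that $D$ need not be closed and that the relevant face need not be exposed: the paper sidesteps this by working with $F_{b(\alpha)}$ directly, while you reach the (non-exposed, in general) face $F_{b(\alpha)}$ by iterating exposed-face cuts, which is exactly why the induction is needed rather than a single supporting-hyperplane step.
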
 

If $D$ is a compact subset of a locally convex space,  the previous theorem is classical and holds in great generality (for example  see 
 \cite[Theorem 2.29 and Corollary 2.71]{LuJaMaNeSp10}).
 If $D$ is not compact even the conclusion $x\in D$ is generally false in infinite dimension (see for example \cite[Main Theorem]{We76}, or see \cite[Section 2.3.B]{LuJaMaNeSp10} for a more detailed study). Notice that the convex set $\{\phi^n-b^n\to 0 \}$ in Lemma \ref{AffCompIsFace} is  not necessarily even closed, so we are required to study general convex sets.
  
To build intuition for Theorem \ref{ConconA_x} consider a simple the two dimensional case: $D =[0,1]\times [-1,1]$ and take $x=(0,0)$, so that $F_x= \{0\}\times (-1,1)$. If $\alpha$ has barycenter $(0,0)$ and $\alpha((0,\infty)\times \bR)>0$ then, to have barycenter $(0,0)$, $\alpha$ must satisfy $\alpha((-\infty,0)\times \bR)>0$. So, if $\alpha$ is  concentrated on $D$ then it is actually concentrated on the smaller set  $F_x$. The general situation however is more involved since not all faces are exposed and we do not assume that $D$ is closed.

\begin{proof}[Proof of Theorem \ref{ConconA_x}] The fact that  $b(\alpha)\in D$ is proved in \cite[Lemma 1, Section 2.8, page 76]{Fe14Ma}. 
By definition of  face $C:=D\setminus F_x$ is convex.  Now assume by contradiction that  $\alpha(C)>0$.  Clearly $\alpha$ is not concentrated on $C$ since otherwise $x:=b(\alpha)\in C$, contradicting $x\in F_x$. Thus $\lambda:=\alpha(C)\in (0,1)$, so we can define mutually singular probabilities $\beta:=\lambda^{-1}\alpha_{|C} $ and $\gamma=(1-\lambda)^{-1}\alpha_{|F_x}$ such that  $\alpha=\lambda\beta+(1-\lambda)\gamma$ and as stated above   the barycenters of $\beta$ and $\gamma$ belong to the convex sets where they  are concentrated, and so $b(\beta)\in C, b(\gamma)\in F_x$.
Since $F_x \ni b(\alpha)=\lambda b(\beta)+(1-\lambda)b(\gamma)$ by definition of face we get that $b(\beta)\in F_x$, contradicting $b(\beta)\in C$. 
\end{proof}

\begin{proof}[Proof of Proposition \ref{prop:as_aff_comp}]
Since $\partial \phi^n$ is an upper hemicontinuous 
 multifunction (see \cite[Chapter 6, Theorem 6.2.4]{HiLe93V1}), there exists a Borel measurable selector $\dot{\phi^{n}}$ of $\partial \phi^n$, i.e. a Borel  function  such that $\dot{\phi^{n}}(x) \in\partial \phi^n(x)$ for all $x\in \bB$:  see for example  \cite[Theorem 18.13 and Lemmas 17.4 and 18.2]{AlBo99}.
Fixed such a Borel selector we define 
\begin{align}
\label{Delta}
\Delta_x \phi^n(y):=\phi^n(y)-(\phi^n(x) + \an{\dot{\phi^{n}}(x),y-x})  .
\end{align} 
Notice that independently from the choice of the kernel $\gamma$ and of the selector $\dot{\phi^{n}}$, one has $\Delta_x \phi^n(y) \geq 0$ and 
\begin{align}
\label{IntDelta}
\textstyle
0 \leftarrow \an{\nu-\mu, \phi^n }= \int  \mu(dx) \int  \gamma(x,dy) \Delta_x \phi^n(y) =\int \theta(d(x,y)) \Delta_x \phi^n(y) 
\end{align} 
and so $\Delta \phi^n\to 0$ in $L^1(\theta)$.
 Passing to a subsequence (without relabeling)  we get that 
$\theta$ a.e. $\Delta \phi^n\to 0$, i.e.  for $\mu$ a.e. $x$ we have that  $\gamma(x,\cdot)$ is concentrated on $\{ y:\Delta_x \phi^n(y)\to 0 \}$. Fix any one such $x$ and apply Theorem \ref{ConconA_x} with $\alpha=\gamma(x,\cdot) $ (so that $b(\alpha)=x$) and $D=\{ y:\Delta_x \phi^n(y)\to 0 \}$  to obtain that $\alpha$ is concentrated on $F_x$ and thus a fortiori on  $\overline{F_x}$, which by  Lemma \ref{AffCompIsFace}  equals 
$\overline{A_x((\phi^n)_n)}$. 
\end{proof} 

\bibliography{mot}
\bibliographystyle{abbrv}

\end{document}